\date{\today}
\newcommand\thistitle{Wiring Switches to Light Bulbs}
\newcommand\thistitleshort{Wiring Switches to Light Bulbs}
\title[\thistitleshort]{\thistitle}
\newcommand\thisauthor{Stephen M. Buckley and Anthony G. O'Farrell}
\newcommand\thisauthorshort{Buckley and O'Farrell}
\author[\thisauthorshort]{\thisauthor}
\address[Both authors]{%
Department of Mathematics and Statistics, Maynooth University,
Maynooth, Co.~Kildare, W23 HW31, Ireland}
\email{stephen.m.buckley@mu.ie, anthony.ofarrell@mu.ie}
\thanks{The first author was partly supported by Science Foundation Ireland.
Both authors were partly supported by the European Science Foundation
Networking Programme HCAA}
\keywords{wiring, switching, MAX-XOR-SAT, Hamming distance}
\subjclass[2020]{Primary: 05D99. Secondary: 11B39, 68R05, 94C10}
\newcommand\ignore[1]{}
\def\la{\lambda}
\def\N{\mathbb{N}}              \def\Z{\mathbb{Z}}
\def\F{\mathbb{F}}
\def\M{\mathcal{M}}
\def\({\left(}           
\def\){\right)}          
\def\lce{\left\lceil}    
\def\rce{\right\rceil}
\def\deg{\operatorname{deg}}  \def\diag{\operatorname{diag}}
\def\ds{\displaystyle}
\numberwithin{equation}{subsection}
\theoremstyle{plain}
\newtheorem{thm}{Theorem}[section]    
\newtheorem{lem}[thm]{Lemma}      \newtheorem{cor}[thm]{Corollary}
\theoremstyle{remark}
\newtheorem{obs}[thm]{Observation}
\def\rf#1{\@rf{#1}#1:;;}
\def\rfs#1{\@rfs{#1}#1:;;}
\def\rfm#1{\@rfF#1<>;;}
\def\@C{C}\def\@E{E}\def\@F{F}\def\@L{L}\def\@O{O}\def\@P{P} 
\def\@Q{Q}\def\@R{R}\def\@S{S}\def\@T{T}\def\@X{X}\def\@s{s} 
\def\@rf#1#2:#3;;{\xdef\@b{#2}
  \ifx\@b\@C Corollary~\ref{#1}\else%
  \ifx\@b\@E (\ref{#1})\else
  \ifx\@b\@F Fact~\ref{#1}\else%
  \ifx\@b\@L Lemma~\ref{#1}\else%
  \ifx\@b\@O Observation~\ref{#1}\else%
  \ifx\@b\@P Proposition~\ref{#1}\else%
  \ifx\@b\@Q Question~\ref{#1}\else%
  \ifx\@b\@R Remark~\ref{#1}\else%
  \ifx\@b\@S Section~\ref{#1}\else%
  \ifx\@b\@T Theorem~\ref{#1}\else%
  \ifx\@b\@X Example~\ref{#1}\else%
  \ifx\@b\@s \S\ref{#1}\else
  \ref{#1}\fi\fi\fi\fi\fi\fi\fi\fi\fi\fi\fi\fi}
\def\@rfs#1#2:#3;;{\def\@b{#2}
  \ifx\@b\@C Corollaries~\ref{#1}\else%
  \ifx\@b\@F Facts~\ref{#1}\else%
  \ifx\@b\@L Lemmas~\ref{#1}\else%
  \ifx\@b\@O Observations~\ref{#1}\else%
  \ifx\@b\@P Propositions~\ref{#1}\else%
  \ifx\@b\@Q Questions~\ref{#1}\else%
  \ifx\@b\@R Remarks~\ref{#1}\else%
  \ifx\@b\@S Sections~\ref{#1}\else%
  \ifx\@b\@T Theorems~\ref{#1}\else%
  \ifx\@b\@X Examples~\ref{#1}\else%
  \ifx\@b\@D Definitions~\ref{#1}\else
  \ref{#1}\fi\fi\fi\fi\fi\fi\fi\fi\fi\fi\fi}
\def\@rfF<#1>#2;;{\def\@c{#2}
  \@rfs{#1}#1:;;\ifx\@c\empty\else\@rfL:#2;;\fi}
\def\@rfL:#1<#2>#3;;{\def\@b{#2}\def\@c{#3}
  #1\ifx\@b\empty\else\ref{#2}\ifx\@c\empty\else\@rfL:#3;;\fi\fi}
\newcommand\authorbio{
{\bf Stephen Buckley} MRIA studied at UCC and Chicago, and worked at 
the University of Michigan before coming to Maynooth.
He has been Head of the Department of Mathematics and Statistics
since 2007. More at \url{https://archive.maths.nuim.ie/staff/sbuckley/}.
\\
{\bf Anthony G. O'Farrell} MRIA studied at UCD and Brown, and worked at UCLA
before taking the chair of Mathematics in Maynooth, where he served for 
37 years and continues as Professor Emeritus. 
More at \url{https://www.logicpress.ie/aof}.
}
\begin{document}

\begin{abstract}
Given $n$ buttons and $n$ bulbs so that the $i$th button toggles the $i$th
bulb and at most two other bulbs, we compute the sharp lower bound on the
number of bulbs that can be lit regardless of the action of the buttons.
\end{abstract}
\maketitle

\section{Introduction}

\subsection{Origins}
The following problem was posed in the 2008 Irish Intervarsity Mathematics
Competition\footnote{Set by the first author. One proof he gave at that time
established Theorem 1.1(b) below by exploiting the discrete dynamical 
systems associated to $\mu^*(n,2)$ in a manner similar to the proof in 
Subsection 4.1.}:
\medskip

\begin{quote}
In a room there are 2008 bulbs and 2008 buttons, both sets numbered from 1
to 2008.  For $1\le i\le 2008$, pressing Button $i$ changes the on/off
status of Bulb $i$ and one other bulb (the same other bulb each
time). Assuming that all bulbs are initially off, prove that by pressing the
appropriate combination of buttons we can simultaneously light at least 1340
of them. Prove also that in the previous statement, 1340 cannot be replaced
by any larger number.
\end{quote}

\medskip

This problem, henceforth referred to as the {\it Prototype Problem}, can be
generalized in a variety of ways:

\begin{enumerate}
\item Most obviously, ``2008'' can be replaced by a general integer $n$.
\item We can consider more general wirings $W$, where each button
    switches the on/off status of a (possibly non-constant) number of
    bulbs.
\item We may consider initial configurations $c$ where not all of the
    bulbs are off.
\item We however insist that the numbers of buttons and bulbs are equal,
    and that Button $i$ changes the on/off status of Bulb $i$, $1\le
    i\le n$.
\end{enumerate}

Figure \ref{Figure1} is a sketch of a typical wiring.
\begin{figure}[ht]
\begin{center}
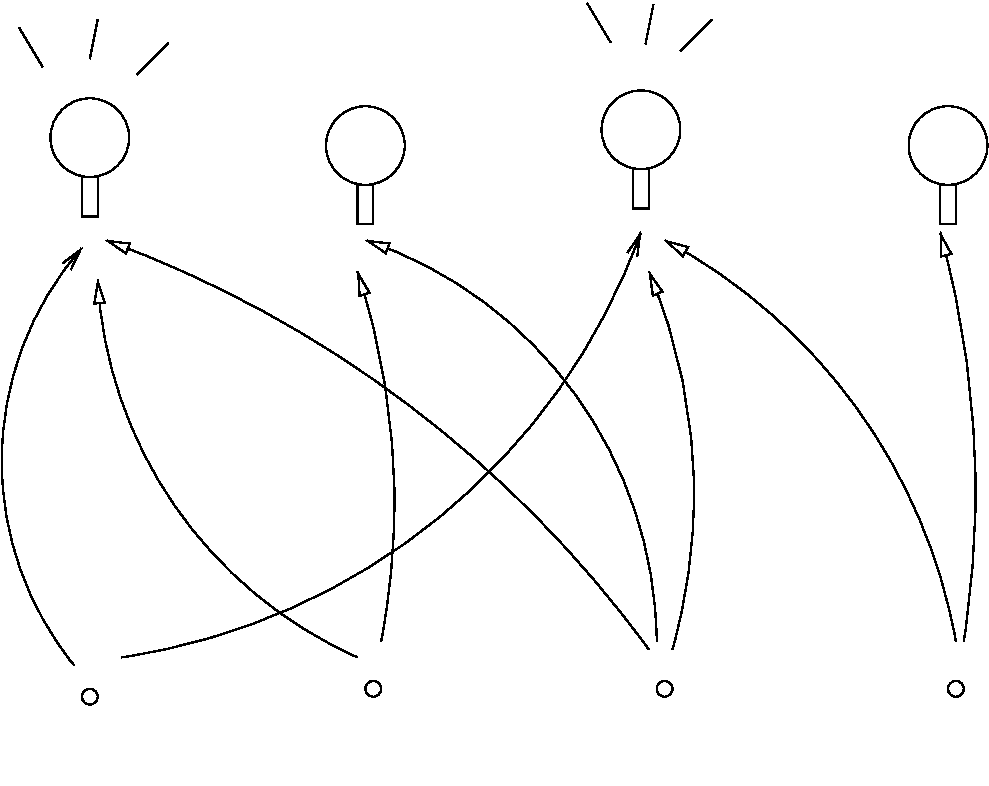
\caption{A Wiring}\label{Figure1}
\end{center}
\end{figure}

These problems are related to the type of problem known as
MAX-XOR-SAT in Computer Science. We discuss this connection in more detail
in Subsection \ref{SS:MXS} below. There may also be a connection to
a meta-Fibonacci sequence related to A046699. See \cite{BOF}.

\subsection{Notation}
Before we continue, let us introduce a little notation. For a fixed wiring
$W$, where the initial on/off configuration of the bulbs is given by $c$, let
$M(W,c)$ be the maximum number of bulbs that can be lit by pressing any
combination of the buttons.

Suppose $n,m\ge 1$. Let $\mu(n,m)$ be the minimum value of $M(W,c)$ over all
wirings $W$ of $n$ buttons and bulbs, where Button $i$ is connected to {\it
at most} $m$ bulbs, including Bulb $i$, for each $1\le i\le n$, and
initially all bulbs are off (which we write as ``$c=0$''). If additionally
$n\ge m$, let $\mu^*(n,m)$ be the minimum value of $M(W,c)$ over all wirings
$W$ of $n$ buttons and bulbs, where Button $i$ is connected to {\it exactly}
$m$ bulbs, including Bulb $i$, for each $1\le i\le n$, and $c=0$. Thus the
Prototype Problem is to show that $\mu^*(2008,2)=1340$.

We define $\mu(n)=\mu(n,n)$, which trivially equals $\mu(n,m)$ for all
$m>n$. Thus $\mu(n)$ is the minimum value of $M(W,0)$, over all wirings of
the $n$ buttons, subject only to condition (d) above.

We also define $\nu(n,m)$, $\nu^*(n,m)$, and $\nu(n)$ in a similar manner to
$\mu(n,m)$, $\mu^*(n,m)$, and $\mu(n)$, respectively, except that we take
the minima over all possible initial configurations $c$, rather than taking
$c=0$. In this article, we are mainly interested in $\mu(n,m)$ and
$\mu^*(n,m)$, and we compute these functions for $m\le 3$. However 
the more
easily calculated $\nu$-variants provide very useful explicit lower bounds
(cf. Theorem \ref{T:nu} below).

\subsection{Results}
Our first theorem gives formulae for $\mu(n,2)$ and $\mu^*(n,2)$; note that
$\mu(n,2)=\mu^*(n,2)$ except when $n\equiv 1 \mod 3$.

\begin{thm}\label{T:m=2} Let $n\in\N$.
\begin{enumerate}
\item $\mu(n,2) = \lce\ds{2n/3}\rce$.
\item If $n\ge 2$, then $\mu^*(n,2) = 2\lce\ds{n/3}\rce$ is the
    least even integer not less than $\mu(n,2)$.
\end{enumerate}
\end{thm}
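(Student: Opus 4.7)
The plan is to work over $\F_2$, representing a bulb configuration by a vector in $\F_2^n$ and encoding button $i$ as the vector $v_i = e_i$ if button $i$ is self-only and $v_i = e_i + e_{\phi(i)}$ if it also toggles a bulb $\phi(i) \ne i$. Then $M(W,0)$ is the maximum Hamming weight on the subspace $\mathrm{span}\{v_1,\dots,v_n\}$. I attach to $W$ the undirected multigraph $G$ on $[n]$ whose edges are $\{i,\phi(i)\}$ for non-self buttons. A connected component of $G$ with $k$ vertices carries between $k-1$ and $k$ edges (one per button in the component, with connectedness forcing $\ge k-1$), so each non-trivial component is either a \emph{tree component} with exactly one self-button or a \emph{unicyclic component} with none; the trivial components are isolated self-buttons.

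The heart of the proof, and the main obstacle, is a clean description of the reachable set. By duality, $y \in (\mathrm{span}\{v_i\})^\perp$ iff $y_i = 0$ for every self-only $i$ and $y_i = y_{\phi(i)}$ whenever $\phi(i)$ is defined. Tracing these relations within a component, one sees that on a tree component the orbit of any vertex under $\phi$ terminates at the unique self-button (forcing $y$ to vanish there), while on a unicyclic component every orbit enters the attractor cycle on which $y$ is forced to be constant (so $y$ is constant on the whole component). Thus $(\mathrm{span}\{v_i\})^\perp$ is spanned by $\{\mathbf{1}_C : C \text{ unicyclic}\}$, and $z$ is reachable iff $\sum_{i\in C} z_i \equiv 0 \pmod 2$ for each unicyclic component $C$. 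Maximising weight componentwise (all $k$ coordinates on tree or trivial components, and the largest even number $\le k$ on a unicyclic $C$) yields
\[
  M(W,0) \;=\; n - o(W),
\]
where $o(W)$ counts the unicyclic components of odd size.

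With this formula in hand, part (a) follows because every odd unicyclic component has at least $3$ vertices, giving $o(W) \le \lfloor n/3\rfloor$ and hence $M(W,0) \ge \lceil 2n/3\rceil$; equality is realised by $\lfloor n/3\rfloor$ disjoint $3$-cycles plus, respectively, no extra, one self-button, or one $2$-cycle (two mutually pointing buttons) to absorb a residue of $0$, $1$ or $2$. For part (b), banning self-buttons forces every component to be unicyclic of size $\ge 2$, and maximising $o(W)$ becomes the combinatorial question of partitioning $n$ into parts of size $\ge 2$ with as many odd parts of size $\ge 3$ as possible. When $n=3k$ or $n=3k+2$, the partition $k\cdot 3$ (with an extra $2$-part if needed) gives $o=k$ and $M = 2\lceil n/3\rceil$; when $n=3k+1$, the residue $1$ cannot join any part of size $\ge 2$, so one three-cycle must be merged into a $4$-part, forcing $o \le k-1$ and $M \ge 2k+2 = 2\lceil n/3\rceil$. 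Matching wirings (the previous constructions, with a $4$-cycle replacing ``three-cycle plus self-button'' in the $n=3k+1$ case) realise these bounds, and a direct check confirms that $2\lceil n/3\rceil$ is indeed the least even integer $\ge \lceil 2n/3\rceil$.
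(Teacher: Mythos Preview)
Your proof is correct and takes a genuinely different route from the paper's. The paper argues procedurally: for each component it describes an explicit button-pressing algorithm (light the trees from the leaves inward, then sweep once around the central cycle), observes that parity forces at most one vertex to remain unlit, and combines this with sublinearity to obtain both bounds. You instead compute the column space of $W$ algebraically, identifying its orthogonal complement over $\F_2$ as $\mathrm{span}\{\mathbf 1_C : C \text{ unicyclic}\}$ and thereby deriving the closed formula $M(W,0)=n-o(W)$. The optimisation then becomes a transparent partition problem.

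Your approach has the advantage of giving an exact description of \emph{every} reachable configuration, not just the maximum weight, and it handles tree components (those with one self-button) more explicitly than the paper, which mentions only singletons when passing from $\mu^*$ to $\mu$. The paper's approach, in exchange, is fully constructive: it tells the reader exactly which buttons to press. One small point worth stating for completeness is why the orbit of $\phi$ on a tree component cannot cycle before reaching the self-button: a directed $2$-cycle would produce a multi-edge and a longer directed cycle would produce an undirected cycle, both forbidden in a tree. With that observation made explicit, your argument is watertight.
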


Next we give formulae for $\mu(n,3)$ and $\mu^*(n,3)$.
\begin{thm}\label{T:m=3} Let $n\in\N$.
\begin{enumerate}
\item $\mu(n,3) = \mu(n,2)$.
\item If $n\ge 3$, then
$$
\mu^*(n,3) =
  \begin{cases}
  4k-1,     & n = 6k-3 \text{ for some } k\in\N, \\
  \mu(n,3), & \text{otherwise}.
  \end{cases}
$$
\end{enumerate}
\end{thm}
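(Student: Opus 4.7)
The inequality $\mu(n,3) \le \mu(n,2)$ in (a) is immediate, since every wiring with column weights at most $2$ is also a wiring with column weights at most $3$, so the minimum defining $\mu(n,3)$ is taken over a larger class. For the reverse inequality $\mu(n,3) \ge \lceil 2n/3\rceil$, I would proceed by induction on $n$, treating small cases directly. Given a degree-$\le 3$ wiring $W$, the strategy is to locate a \emph{detachable} sub-wiring: a set $I$ of $k$ buttons whose bulbs are affected by no button outside $I$. Setting such a piece aside lets me apply the induction hypothesis to the remaining $n-k$ buttons and handle the detached piece by hand; the aim is to light at least $\lceil 2k/3\rceil$ of the bulbs in $I$. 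The natural pieces are a single degree-$1$ button ($k=1$), two degree-$2$ buttons whose secondary bulbs coincide with each other's indices ($k=2$), and a ``triangle'' of three degree-$3$ buttons all affecting the same three bulbs ($k=3$). The main obstacle is the case when no detachable piece exists; here I would press a single degree-$3$ button $j$ together with a carefully chosen ``cancelling'' neighbour so that the net effect is a degree-$\le 2$ move to which Theorem \ref{T:m=2} applies.

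For part (b), $\mu^*(n,3) \ge \mu(n,3) = \lceil 2n/3\rceil$ is immediate. When $n\not\equiv 3\pmod 6$ and $n\ge 3$, I would exhibit an extremal exact-degree-$3$ wiring by starting from an extremal degree-$\le 2$ wiring produced by Theorem \ref{T:m=2} and attaching triangles or similar gadgets so that every column weight becomes exactly $3$ while the image of the wiring matrix $A$ over $\F_2$ is unchanged. This yields $\mu^*(n,3) \le \lceil 2n/3\rceil$ and forces equality. The case $n = 6k-3$ is genuinely exceptional; for the upper bound $\mu^*(6k-3,3) \le 4k-1$, the $k=1$ triangle wiring (with $A$ the $3\times 3$ all-ones matrix) already attains $M(W,0) = 3 = 4k-1$, and for larger $k$ I would couple $k$ copies of this obstruction in a chain or ring designed to keep the maximum Hamming weight of $Ax$ bounded by $4k-1$.

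For the lower bound $\mu^*(6k-3,3) \ge 4k-1$, I would use the parity identity $|Ax|_1 \equiv |x|_1\pmod 2$, which holds because every column of $A$ has the odd weight $3$: summing the entries of $Ax$ modulo $2$ and exchanging the two sums reduces to $3\sum_j x_j \equiv \sum_j x_j \pmod 2$. Since $\lceil 2n/3\rceil = 4k-2$ is even, it suffices to produce an $x$ with $|x|_1$ odd and $|Ax|_1 \ge 4k-2$; such an $x$ automatically satisfies $|Ax|_1 \ge 4k-1$ by parity. The main obstacle is that the inductive argument of part (a) does not obviously allow one to control the parity of $|x|_1$; to overcome it I would refine the induction so that in the residual situations the selected $x$ can be modified by a single additional button press, changing $|x|_1$ by one while preserving the weight bound on $Ax$ thanks to the slack available when $n = 6k-3$.
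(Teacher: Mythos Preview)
Your parity observation in part~(b) is correct and elegant: since every column of $W\in A^*(n,3)$ has odd weight, one has $|Wx|\equiv|x|\pmod 2$. But the argument you build on it does not close. You need, for every $W\in A^*(6k-3,3)$, an \emph{odd-weight} $x$ with $|Wx|\ge 4k-2$. Part~(a) only supplies an $x$ of uncontrolled parity, and your proposed fix (``modify by a single button press using the slack'') cannot work: if $M(W,0)=4k-2$ and $|Wx|=4k-2$, then a single press changes $|Wx|$ by $\pm1$ or $\pm3$ and cannot increase it past the maximum, so the new value is at most $4k-3$. There is no slack. The paper does not use parity here; instead it assumes $M(W,0)=2n/3$ for some extremal $W$ and carries out a lengthy structural analysis of the associated graph (decomposing vertices into $C_3$, $C_2$, and $C_1$ pieces and counting) to force $n\equiv 0\pmod 6$.

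The missing ingredient throughout your proposal is the paper's \emph{pivoting} operation: given $W$ and a vertex $i$, replace each column indexed by $j\in F(i)$ with a copy of column~$i$, producing a new wiring $W^i\in A^*(n,3)$ with $M(W^i,0)\le M(W,0)$ and with $F(i)$ now a forward-invariant $C_3$. This is precisely what manufactures your ``detachable pieces'' when none exist a priori, and it drives both the clean induction for part~(a) (via the dichotomy: either $\mu(n,3)=\mu(n,2)$ already, or pivot about a degree-$3$ vertex to split off a $C_3$ and obtain $\mu(n,3)\ge\mu(n-3,3)+2$) and the structural reduction in part~(b). Your suggestion to ``press $j$ together with a cancelling neighbour so that the net effect is a degree-$\le 2$ move'' is not the same: pressing buttons changes the state vector, not the wiring, and does not reduce the problem to an instance in $A(n',m)$ to which the inductive hypothesis or \rf{T:m=2} applies.

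Your upper-bound constructions in~(b) are also not right as stated. For $n=6k-3$ you propose coupling ``$k$ copies'' of the $3\times 3$ all-ones block, but $k$ copies account for only $3k\ne 6k-3$ vertices, while $2k-1$ disjoint copies would give $M=6k-3$, far too large. The paper instead proves a general lemma $\mu^*(n+n',m+1)\le\mu^*(n,m)+n'$ (by duplicating columns of an extremal $A^*(n,m)$ wiring and padding) and applies it with $m=2$ to get $\mu^*(3k,3)\le\mu^*(3k-3,2)+3=2k+1$; the case $n\equiv 0\pmod 6$ is handled separately by an explicit $6\times 6$ block $W_6$ with $M(W_6,0)=4$.
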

Note that $\mu^*(n,3)
= \mu(n,3)+1$ in the exceptional case $n=6k-3$.

We shall discuss $\mu(n,m)$ and $\mu^*(n,m)$ in the case $m>3$,
(and the relationship to a meta-Fibonacci sequence)
in a separate article
\cite{BOF}. Let us simply note here only that $\mu(n,m)$ and $\mu^*(n,m)$ are no
longer asymptotic to $2n/3$ for large $n$, when $m\ge4$. For instance, we
prove in \cite{BOF} that $\mu(n,4)$ is asymptotic to $4n/7$, and that
$\liminf\limits_{n\to\infty} \mu(n)/n=1/2$. 
\smallskip

After some preliminaries in the next section, we prove general formulae
for $\nu(n,m)$ and $\nu^*(n,m)$ in \rf{S:nu}. We then prove \rf{T:m=2} in
\rf{S:m=2} and \rf{T:m=3} in \rf{S:m=3}.

We wish to thank David Malone for pointing out the connection between our
results and SAT.  We are grateful to the referee for some comments that improved the exposition.

\section{Notation and terminology} \label{S:notation}

\subsection{Graphs}
The notation and terminology introduced in this section will be used
throughout the article. We begin by recasting our problem. First note that we
can replace the twin notions of buttons and bulbs with the single notion of
vertices: when a vertex is pressed, the on/off state of that vertex and some
other vertices is switched. The vertex set $S:=S(n):=\{1,\dots,n\}$ is associated
with a directed graph $G$: we draw an edge from vertex $i$ to each vertex
whose on/off status is altered by pressing vertex $i$.
Figure \ref{Figure2} shows a representation of the
directed graph corresponding to the wiring in
Figure \ref{Figure1}.
\begin{figure}[ht]
\begin{center}
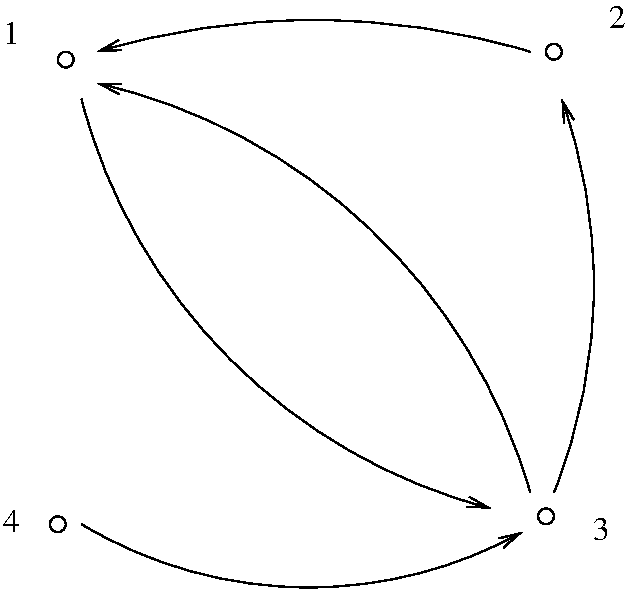
\caption{Graph for the wiring in Figure \ref{Figure1}}\label{Figure2}
\end{center}
\end{figure}
Notice that to avoid clutter we do not draw the loop from each vertex
to itself, which is always present since
a given button always switches the corresponding bulb.

\subsection{Edge function} Associated with a given directed graph $G$ is the {\it edge function}
$F:S\to 2^S$, where $j\in F(i)$ if there is an edge from $i$ to
$j$, and the {\it backward edge function} $F^{-1}:S\to 2^S$, where $j\in
F^{-1}(i)$ if there is an edge from $j$ to $i$. 
In the case where $G$ represents a 
button-bulb wiring $W$, $F(i)$ corresponds to the set of bulbs whose on/off
status changes when Button $i$ is pressed while $F^{-1}(i)$ corresponds to
the set of buttons that, when pressed, change the on/off status of Bulb $i$.
These functions 
specify the target of each outgoing edge and the
source of each incoming edge,
i.e. the head of each outgoing arrow and 
the feathers of each incoming arrow. We extend the
definitions of $F$ and $F^{-1}$ to $2^S$ in the usual way: $F(T)$ and
$F^{-1}(T)$ are the unions of $F(i)$ or $F^{-1}(i)$, respectively, over all
$i\in T\subset S$. We say that $T\subset S$ is {\it forward invariant} if
$F(T)\subset T$, or {\it backward invariant} if $F^{-1}(T)\subset T$. We
denote by $G_T$ the subgraph of $G$ consisting of the vertices in $T$ and
all edges between them.

\subsection{Matrix reformulation}
If we examine the effect of a finite sequence of vertex presses
$i_1,\dots,i_k$, on a fixed vertex $i_0$, it is clear that the final on/off
state of vertex $i_0$ depends only on its initial state and the parity of
the number of indices $j$, $1\le j\le k$, for which $i_0\in F(i_j)$. In
particular, the order of the vertices in our finite sequence is irrelevant
to the final state of $i_0$. Since this is true for each vertex, we readily
deduce the following:

\begin{itemize}
\item The order of a finite sequence of vertex presses is irrelevant to
    the final on/off states of all vertices.
\item We may as well assume that each vertex is pressed at most once,
    since pressing it twice produces the same effect as not pressing it
    at all.
\end{itemize}

Thus, instead of talking about a {\it finite sequence} of vertex presses, we
can talk about a {\it set} of vertex presses and represent this set as an
$n$-dimensional column vector $x\in\F_2^n$ (where $\F_2=\{0,1\}$ denotes the
field with two elements), with $x_i=1$ if and only if vertex $i$ is pressed
once and $x_i=0$ if it is not pressed at all. Similarly, we represent the
initial on/off state of the vertices by a column vector $c\in\F_2^n$, with
$c_i=1$ if and only if vertex $i$ is initially lit. Lastly, we represent the
wiring $W$ as an element in $\M(n,n;\F_2)$, the space of $n\times n$ matrices
over $\F_2$. To be specific, $W=(w_{i,j})$, where $w_{i,j}=1$ if and only if
vertex $j$ affects the on/off status of vertex $i$; we note that
$w_{i,i}=1$ for all $i\in S$.
The non-zero entries
in the i-th row of W lists those vertices that switch vertex i on or off. The
non-zero entries in the j-th column list those vertices that are switched on
or off by vertex j. The matrix W is, in fact, the transpose of the adjacency
matrix for the directed graph G. 
With these conventions, the vector $v=Wx+c\in\F_2^n$ is
such that $v_i=1$ if and only if vertex $i$ is lit, assuming we have initial
configuration $c$, wiring $W$, and vertex presses given by $x$. 

\subsection{Degree}
The {\it degree of vertex $i$}, $\deg(i)$, is
the number of $1$'s in the $i$th column of $W$ (or, equivalently, the
cardinality of $F(i)$. 
In graph-theoretic terms, this degree is the
\emph{out-degree} of the vertex).
 We define the {\it degree of $W$}, $\deg(W)$, to be
$\max\{\deg(i) : i\in S\}$.  

For $u\in\F_2^n$, we define $|u|$ to be the \emph{Hamming norm} or Hamming
distance from $u$ to the origin, i.e.~the number of $1$ entries in $u$. 
Then $\deg(i)$ for a wiring $W$ is the norm of the
$i$-th column of the matrix $W$.
Also, $|Wx+c|$ is the number of lit vertices, 
assuming we have initial
configuration $c$, wiring $W$, and vertex presses given by $x$. 
Thus the function $M(W,c)$ defined in the
Introduction can now be described as 
$$ M(W,c) = \max\{\,|Wx+c|: x\in \F_2^n\,\}.$$

For $n,m\ge 1$, we define $A(n,m)$ to be the set of matrices $W\in
\M(n,n;\F_2)$ that have $1$'s all along the diagonal and satisfy $\deg(W)\le
m$. If also $n\ge m$, we define $A^*(n,m)$ to be the set of matrices in
$A(n,m)$ for which $\deg(i)=m$, for all $i\in S$. These classes of matrices
are the classes of admissible wirings for the functions defined in the
Introduction:
$$
\begin{aligned}
\mu(n,m)   &= \min\{ M(W,0) : W\in A(n,m) \}\,, \\
\mu^*(n,m) &= \min\{ M(W,0) : W\in A^*(n,m) \}\,, \\
\nu(n,m)   &= \min\{ M(W,c) : W\in A(n,m),\; c\in\F_2^n \}\,, \\
\nu^*(n,m) &= \min\{ M(W,c) : W\in A^*(n,m),\; c\in\F_2^n \}\,, \\
\end{aligned}
$$
The largest class of admissible wirings on $n$ vertices that interests us is
$A(n):=A(n,n)$. This gives rise to the numbers $\mu(n):=\mu(n,n)$ and
$\nu(n):=\nu(n,n)$, as defined in the Introduction. It is convenient to
define $\mu(0,m)=0$ for all $m\in\N$.

\subsection{Connection to coding}
Although the Hamming distance is a central part of the problems under
consideration, these problems are on the surface quite different from those
in coding theory, since we are looking for wirings that minimize the maximum
distance from the origin of $Mx$, $x\in\F_2^n$, whereas in coding theory we
are looking for codes that maximize the minimum distance between codewords.
However, it is shown in \cite{BOF} that Sylvester-Hadamard matrices, which
are known to give rise to Hadamard codes that possess a certain optimality
property, also give rise to certain optimal wirings.

\subsection{Augmented complete graphs}
In graph theory, a \emph{complete directed graph on $r$ vertices}
(also called a $K_r$) has an edge from each vertex to each other vertex.
A wiring of $r$ bulbs
for which each button switches all the bulbs corresponds to 
a graph which has a $K_r$ augmented by a loop at each vertex.
We call such a graph an \emph{augmented complete graph},
or a $\hat K_r$.
Given the graph $G$ of a wiring,
we say that a subgraph $H$ of $G$ is an {\it augmented
complete
subgraph on $r$ vertices}, or \emph{a $\hat K_r$ in $G$,}
if there is an edge from every vertex of $H$ to
every vertex of $H$.  If $H$ is such a subgraph, we call
the set of its vertices  
a \emph{$\hat K_r$ set in $G$}.

For $t\in\{0,1\}$, we denote by $t_{p\times q}$ the $p\times q$ matrix all
of whose entries equal $t$, and let $t_p=t_{p\times p}$. The matrix $1_p$
should not be confused with the $p\times p$ identity matrix  $I_p$.

\subsection{Relationship to Satisfiability}\label{SS:MXS}
The problems under consideration in this article are closely related to
MAX-XOR-SAT problems in Computer Science. These problems are in the general
area of propositional satisfiability ({\it SAT}). To be specific, we want to
assign values to Booloean variables so as to maximize the number of clauses
that are true, where each clause is composed of a set of variables connected
by XORs. Since XOR in Boolean logic corresponds to addition mod 2, this
problem can be written in our notation as follows: given a matrix $W\in
\M(N,n;\F_2)$, we wish to choose a {\it variables vector}
$x=(x_1,\dots,x_n)\in\F_2^n$ so as to maximize the Hamming norm $|Wx|$;
the $N$ entries in $Wx\in\F_2^N$ are the {\it clauses}. Thus the goal is to
compute $M(W,0)$.

XOR-SAT and MAX-XOR-SAT have been studied extensively; see
for instance \cite{CD1}, \cite{CD2}, \cite{CDE}, \cite{DR}. Algorithms for
solving such problems are useful in cryptanalysis \cite{DGMMPR}, \cite{SNC}.

The relationship between MAX-XOR-SAT and our wiring problem is plain to see,
so let us instead mention the differences:
\begin{itemize}
\item MAX-XOR-SAT is concerned with finding $M(W,0)$ for a fixed but
    arbitrary $W$, rather than seeking the minimum of $M(W,0)$ over a
    class of admissible wirings $W$. The main problems in MAX-XOR-SAT revolve
    around the efficiency of the computation of $M(W,0)$ for large $n$
    rather than the computation of a minimum for all $n$.
\item In MAX-XOR-SAT, there is no requirement that $N=n$, and so no
    matching of clauses with variables (or bulbs with buttons in our
    terminology) and no requirement that $w_{ii}=1$.
\item In MAX-XOR-SAT and other SAT problems, the typical simplifying
    assumption is that there are either exactly, or at most, $m$
    variables in each clause. Thus in SAT we typically bound the Hamming
    norms of the rows of $W$, while in our wiring problem we bound the
    Hamming norms of the columns of $W$.
\end{itemize}

In spite of the differences, we would hope that the lower bounds in $M(W,0)$
given by our results might be of some interest to MAX-XOR-SAT researchers.

\section{Formulae for $\nu$ and $\nu^*$}\label{S:nu}

\subsection{Trivial bounds}
Loosely speaking, larger sets of numbers have smaller minima. More
precisely,
if $E\subset F\subset\N$, then $\min F\le \min E$. 
Thus
given $n\ge m$, the following inequalities are immediate:
\begin{align}
\nu(n,m) &\le \nu^*(n,m) \le \mu^*(n,m) \\
\nu(n,m) &\le \mu(n,m) \le \mu^*(n,m)
\end{align}

\subsection{A lower bound for $M(W,c)$}

\begin{lem}\label{L:mean}
Let $n\in\N$. For all $W\in A(n)$ and $c\in\F_2^n$, the mean value of
$|Mx+c|$ over all $x\in\F_2^n$ is $n/2$. In particular, $M(W,c)\ge n/2$ and
$M(W,c)>n/2$ if the cardinality of $\{i\in[1,n]\cap\N: c_i=1\}$ is not
$n/2$.
\end{lem}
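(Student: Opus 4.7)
The plan is to compute the mean directly via linearity, reducing to a coordinate-by-coordinate averaging argument, and then deduce both inequalities as simple consequences.

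First I would write $|Wx+c| = \sum_{i=1}^n (Wx+c)_i$, viewing each coordinate as $0$ or $1$ in $\Z$, so that averaging over $x \in \F_2^n$ and exchanging sums gives
\[
\frac{1}{2^n}\sum_{x\in\F_2^n} |Wx+c| \;=\; \sum_{i=1}^n \frac{\#\{x\in\F_2^n : (Wx+c)_i = 1\}}{2^n}.
\]
The main (and essentially only) step is to show that each summand equals $1/2$. Fix $i$. Because $W\in A(n)$, we have $w_{ii}=1$, so the $i$th row of $W$ is nonzero, and the map $x\mapsto (Wx)_i = \sum_j w_{ij}x_j$ is a nonzero $\F_2$-linear functional on $\F_2^n$. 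Such a functional is surjective, so its kernel has index $2$, and each value in $\F_2$ is attained exactly $2^{n-1}$ times. Translating by the constant $c_i$ preserves this equidistribution, giving $\#\{x : (Wx+c)_i = 1\} = 2^{n-1}$. Summing over $i$ yields mean $n/2$, establishing the first claim.

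For the bound $M(W,c)\ge n/2$, note simply that the maximum of a function is at least its mean. For the strict inequality, suppose the number of indices with $c_i=1$ is not $n/2$; then setting $x=0$ gives the particular value $|W\cdot 0 + c| = |c| \neq n/2$. Since the mean over $x \in \F_2^n$ is exactly $n/2$ and not every value of $|Wx+c|$ equals $n/2$, some value must strictly exceed $n/2$, so $M(W,c) > n/2$.

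There is no serious obstacle here: the argument is a one-line averaging over each row, and the diagonal condition $w_{ii}=1$ is precisely what guarantees that every row of $W$ is nonzero, which is all we need for the equidistribution of each $(Wx+c)_i$.
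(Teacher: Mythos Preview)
Your proof is correct and follows essentially the same approach as the paper: both argue coordinate-by-coordinate that $(Wx+c)_i$ equals $1$ for exactly half of all $x$, then sum. The paper phrases the key step as a bijection (flipping $x_i$ toggles the $i$th output since $w_{ii}=1$), while you phrase it as the equidistribution of a nonzero $\F_2$-linear functional; these are the same observation in different language, and your treatment of the strict inequality via $x=0$ makes explicit what the paper leaves as ``follows easily.''
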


\begin{proof}
Fix $W$ and $c$. Let $S_i=\{x\in\F_2^n: x_i=0\}$ and $T_i=\F_2^n\setminus
S_i$. Both $S_i$ and $T_i$ have cardinality $2^{n-1}$.
Then, $f:S_i \to T_i$ is a bijection, where $f (x)$ differs from $x$ in
the $i$-th position and only in the $i$-th position. Since pressing vertex $i$ toggles
its own on/off status, $(W x + c)_i = 1$ if and only if $(W f (x) + c)_i = 0$. Let $k$ be
the number of sets of vertex presses $x$ in $S_i$ for which $(W x + c)_i = 1$. Then
exactly $k$ sets of vertex presses $x$ in $T_i$ lead to $(W x + c)_i = 0$ and so 
$2^{n-1}-  k$
lead to $(W x + c)_i = 1$. In total, therefore, there are $2^{n-1}$ sets of vertex presses
$x$ in $\F_2^n$
for which $(W x + c)_i = 1$. The mean value of $(W x + c)_i$ is therefore
$\frac12$ for each $i$. The mean value of $|W x + c|$ is then $n/2$ since this mean value
is given by
$$\frac1{2^n} \sum_{ x\in \F^n_2} |W x + c| = 
\frac1{2^n} \sum_{ x\in \F_2^n} \sum_{i=1}^n (W x + c)_i =
\sum_{i=1}^n \frac1{2^n} \sum_{ x\in \F_2^n} (W x + c)_i =
\frac{n}2.$$
The last statement
in the lemma follows easily.
\end{proof}

\subsection{}
The above lemma is a key tool in proving the following result which gives
the general formula for $\nu(n,m)$ and $\nu^*(n,m)$. In this result, we
ignore the case $m=1$ since trivially $\nu(n,1)=\nu^*(n,1)=n$.

\begin{thm}\label{T:nu}
Let $n,m\in\N$, $m>1$.
\begin{enumerate}
\item $\nu(n) = \nu(n,m) = \lce \ds{\frac{n}2} \rce$.
\item If $n\ge m$, then
$$
\nu^*(n,m) =
  \begin{cases}
  \nu(n,m)+1, &\text{if $\/n$ is even and $\/m$ odd}, \\
  \nu(n,m),   &\text{otherwise}.
  \end{cases}
$$
In particular, $\nu^*(n,2)=\nu^*(n)=\nu(n)$ for all $n>1$.
\end{enumerate}
\end{thm}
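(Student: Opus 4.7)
The plan is to establish the lower bounds by averaging and parity arguments, and the upper bounds by explicit block-diagonal constructions. The lower bound in (a) is immediate: \rf{L:mean} gives $M(W,c)\ge n/2$ for every $W\in A(n,m)$ and $c\in\F_2^n$, so $\nu(n,m)\ge\lceil n/2\rceil$. The only new lower bound required is $\nu^*(n,m)\ge n/2+1$ when $n$ is even and $m$ odd, which I would prove by a parity argument: if $W\in A^*(n,m)$ and $m$ is odd, then every column sum of $W$ equals $m\equiv 1\pmod 2$, so $\mathbf 1^t W=\mathbf 1^t$ in $\F_2$, and therefore
$$
|Wx+c|\equiv\mathbf 1^t(Wx+c)=\mathbf 1^t x+\mathbf 1^t c\pmod 2.
$$
This takes both parities as $x$ varies over $\F_2^n$, so $|Wx+c|$ is non-constant; combined with its mean being the integer $n/2$, the maximum must exceed $n/2$.

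For the upper bound in (a), it suffices to exhibit a single wiring in $A(n,2)\subset A(n,m)$ that achieves $M(W,c)=\lceil n/2\rceil$. Take $W$ block-diagonal with $\lfloor n/2\rfloor$ copies of the $2\times 2$ all-ones block $J_2$, plus a $1\times 1$ block $(1)$ if $n$ is odd. Since $J_2$ has image $\{(0,0)^t,(1,1)^t\}$, choosing $c=(0,1)^t$ on each $J_2$ block makes the Hamming weight on that block identically equal to $1$, and the singleton contributes at most $1$, so $M(W,c)\le\lceil n/2\rceil$.

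The main technical content is the upper bound for $\nu^*(n,m)$, where every column degree must equal $m$ exactly, so the $J_2$-block construction does not lie in $A^*(n,m)$ when $m\ne 2$. The natural building block is the $k\times k$ all-ones matrix $J_k$: it has column degree $k$ and image $\{\mathbf 0,\mathbf 1_k\}$, and with $c$ of weight $\lfloor k/2\rfloor$ on the block it contributes $\lceil k/2\rceil$ to $M(W,c)$. When $m$ is even and $m\mid n$, a direct sum of $n/m$ copies of $J_m$ achieves $M(W,c)=n/2$ immediately, so the non-exceptional formula $\nu^*(n,m)=\lceil n/2\rceil$ holds. The remaining subcases are handled by hybrid constructions: for $m=2$ with $n$ odd, one $J_2$ is replaced by a $3\times 3$ directed-cycle matrix (degree $2$, image confined to the even-weight subspace of $\F_2^3$, max weight $2$); more generally, for $n=qm+r$ with $0<r<m$ one $J_m$ block is enlarged to an $(m+r)\times(m+r)$ block such as $J_{m+r}-A$ with $A$ the adjacency matrix of an $r$-regular graph on $m+r$ vertices, which preserves the condition that every column degree equals $m$. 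The main obstacle is verifying, across all parity subcases for $n$, $m$, and $r$, that the hybrid block attains exactly the target local maximum without overshooting; in particular, when no $r$-regular graph on $m+r$ vertices exists (for instance $m$ even, $r$ odd, so $n$ odd), additional ad hoc modifications will be required, and when $n$ is even and $m$ odd the hybrid construction must be tuned to match the parity lower bound $n/2+1$.
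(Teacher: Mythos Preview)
Your lower bounds and the upper bound for (a) are correct and match the paper exactly: \rf{L:mean} for $\lceil n/2\rceil$, the parity identity $|Wx+c|\equiv\mathbf 1^tx+\mathbf 1^tc\pmod 2$ when $m$ is odd, and the $J_2$-block wiring with alternating $c$.

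The real gap is the upper bound for (b). You want $W$ strictly block-diagonal with $J_m$ blocks plus one hybrid block $J_{m+r}-A$, but you give no argument that $M(J_{m+r}-A,c')\le\lceil(m{+}r)/2\rceil$; this is precisely the statement you are trying to prove, specialised to the range $m\le n<2m$, and you have neither an inductive framework nor a direct treatment of that base range. (Your own $m=2$ example already falls outside the recipe: the directed $3$-cycle is not $J_3-A$ for any undirected $1$-regular $A$ on three vertices.) The paper sidesteps this entirely by dropping the block-diagonal requirement: it takes $W'=\diag(J_2,\dots,J_2,J_m)$ and raises each $J_2$-column to degree $m$ by placing the extra $m-2$ ones into rows of the $J_m$ block. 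This keeps rows $2k-1$ and $2k$ equal for every $J_2$ pair, so with $c$ the even-indicator vector each such pair contributes exactly one lit vertex regardless of $x$; only the few unpaired rows inside the $J_m$ block (plus, when $n-m$ is odd, one auxiliary $3\times3$ directed-cycle block) need a brief direct check. No existence question for $A$ and no image analysis of a large hybrid block arise.
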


\begin{proof} We will prove each identity by
showing that the right-hand side is both a lower and an upper bound
for the left-hand side.

	By \rf{L:mean}, $M(W,c)\ge\lce\ds{\frac{n}2}\rce$ for all $W\in A(n)$ and
$c\in\F_2^n$. This global lower bound yields the desired lower bound for
$\nu(n)$ and {\it a fortiori} for $\nu(n,m)$ and for $\nu^*(n,m)$ except in
the case where $n$ is even and $m$ is odd.
We postpone the proof of the lower bound in this case, until
we have completed the proof of (a).

\smallskip
To prove the reverse inequalities, the upper bounds, we take as our initial
configuration the {\it even indicator vector} $e\in\F_2^n$ defined by
$e_i=1$ when $i$ is even, and $e_i=0$ when $n$ is odd. We split the set of
integers between $1$ and $n$ into pairs $\{2k-1,2k\}$, $1\le k\le n/2$, with
$n$ being unpaired if $n$ is odd; corresponding to the pairs of integers, we
have {\it pairs of rows} in the wiring matrix $W$ and {\it pairs of
vertices}. For each proof of sharpness, we will define $W=(w_{i,j})$ such
that $M(W,e)$ equals the desired lower bound. Pressing vertex $j$ has no
effect on the pair of vertices $2k-1$ and $2k$ if $w_{2k-1,j}=w_{2k,j}=0$,
and it toggles both of them if $w_{2k-1,j}=w_{2k,j}=1$. Since initially one
vertex in each pair is lit, this remains true regardless of what vertices we
press if the corresponding pair of rows are equal to each other (as will be
the case for most pairs of rows). Thus, in calculating $M(W,e)$, we can
ignore all pairs of equal rows, for which the corresponding vertex presses
leaves the number of lit vertices unchanged, and we only have to consider the
unpaired vertex, if present.

To finish the proof of (a), it suffices to show that $\nu(n,2)\le \lce
	\ds{\frac{n}2} \rce$. Define the $n\times n$ block diagonal matrix
\begin{equation}\label{E:nu-W}
W =
  \begin{cases}
  \diag(1_2,\dots,1_2),     & n\text{ even,} \\
  \diag(1_2,\dots,1_2,1_1), & n\text{ odd,}
  \end{cases}
\end{equation}
In case $n=9$, this matrix
corresponds to the wiring of nine buttons and bulbs represented
by Figure \ref{Figure3}. In this figure, the boxes labelled by
the number
$2$ represent augmented complete directed graphs on two vertices,
	and the small circle represents a single vertex (and its loop).
We shall always indicate an augmented complete $\hat K_v$ subgraph by a box
labelled $v$.
\begin{figure}[ht]
\begin{center}
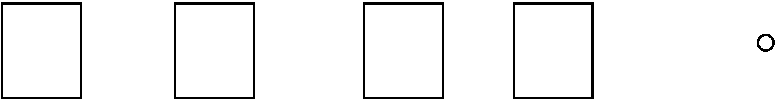
\caption{$n=9$}\label{Figure3}
\end{center}
\end{figure}

	Then $W\in A(n,2)$ and $M(W,e)=\lce \ds{\frac{n}2} \rce$. To see this, note
that rows $2k-1$ and $2k$ of $W$ are equal to each other for each $1\le k\le
n/2$. Thus when $n$ is even, $|Wx+e|$ is independent of $x$, while it
toggles between the two values $r$ and $r-1$ when $n=2r-1$ is odd, due to
the change in the state of vertex $n$ each time that vertex is pressed.

Now we prove the lower bound for (b)
in the exceptional case. 
Fix $c\in\F_2^n$ and $W\in A^*(n,m)$ for some odd $m>1$ and $n\ge m$. Each
vertex press must change the parity of the number of lit vertices and, since
the mean value of $|Wx+c|$ is $n/2$, it follows that $|Wx+c|>n/2$ for some
$x\in\F_2^n$. Since $\nu(n,m)=n/2$ if $n$ is even, we deduce that
$\nu^*(n,m)\ge\nu(n,m)+1$ if $n$ is even and $m$ odd.

It remains to prove that the desired formula in (b) for $\nu^*(n,m)$ is also an
upper bound for $\nu^*(n,m)$ when $n\ge m>1$. Suppose first that $n-m$ is
even. First, define the block diagonal matrix $W'\in A(n,m)$ by the formula
$W'=\diag(1_2,\dots,1_2,1_m)$, where there are $(n-m)/2$ copies of $1_2$. We
modify $W'=(w'_{i,j})$ to get a matrix $W=(w_{i,j})\in A^*(n,m)$ by
adding $m-2$ $1$'s to the end of each of the first $n-m$ columns, i.e.~let
$$
w_{i,j} =
  \begin{cases}
  1, &\text{$i>n-m+2$ and $j\le n-m$,} \\
  w'_{i,j}, &\text{otherwise}
  \end{cases}
$$
In case $n=9$ and $m=3$, the matrix $W$ corresponds
to a wiring of the kind indicated in Figure \ref{Figure4}.
\begin{figure}[ht]
\begin{center}
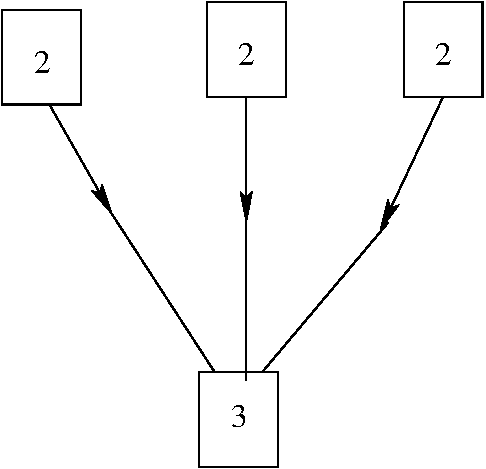
\caption{$n=9$, $m=3$}\label{Figure4}
\end{center}
\end{figure}
In this diagram, the boxes indicate augmented complete subgraphs
having two or three vertices, as indicated.
A single arrow coming from a $\hat K_2$ box indicates an edge
from {\em each} of the two vertices in the box
and going to {\em the same} vertex in the $\hat K_3$. The target
vertex may be the same or different
for the three $\hat K_2$'s, but the vertices in
a given $\hat K_2$ share the same target.  In general, in our diagrams,
we will use the convention that \textbf{all the buttons corresponding
to vertices in a given $\hat K_r$ box produce exactly the same effect.}
Notice that nonisomorphic graphs may correspond to the
same \lq\lq box diagram", in view of the fact that a box
diagram is not specific about the targets of some
arrows.

All paired rows of $W$ are equal, so if $n$ and $m$ are both even, then
$|Wx+e|=n/2$ for all $x\in\F_2^n$, whereas if $n$ and $m$ are both odd, the
value of $|Wx+e|$ is either $(n+1)/2$ or $(n-1)/2$, depending on the parity
of $|x_i|$. In either case, we have found a matrix $W\in A^*(n,m)$ with
$M(W,e)=\nu(n,m)$, and so $\nu^*(n,m)=\nu(n,m)$.

Suppose next that $n$ is odd and $m$ even, with $n>m+1$. We first define the
block diagonal matrix $W'\in A(n,m)$ by the formula
$W'=\diag(1_m,1_2,\dots,1_2,W_3)$, where there are $(n-m-3)/2$ copies of
$1_2$ and
\begin{equation}\label{E:W3}
W_3 = \begin{pmatrix} 1 & 0 & 1 \\ 1 & 1 & 0 \\ 0 & 1 & 1 \end{pmatrix}\,.
\end{equation}
and then define $W=(w_{i,j})$ by the equation
\begin{equation} \label{E:WW'}
w_{i,j} =
  \begin{cases}
  1, &\text{$3\le i\le m$ and $j>m$,} \\
  w'_{i,j}. &\text{otherwise}
  \end{cases}
\end{equation}
The corresponding wiring is indicated schematically
in Figure \ref{Figure5}.
\begin{figure}[ht]
\begin{center}
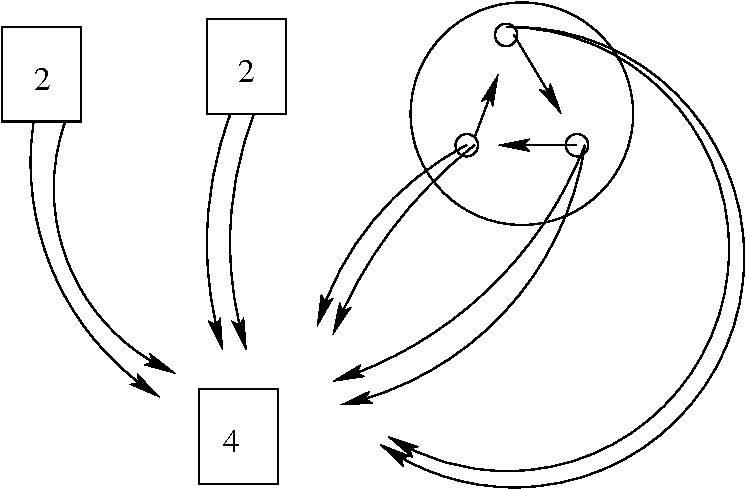
\caption{$n=11$, $m=4$}\label{Figure5}
\end{center}
\end{figure}

The circled subgraph corresponds to the matrix $W_3$. The double
arrows coming from each $\hat K_2$ each represent four edges in the graph,
i.e. two pairs of edges, where each pair has a distinct target
and the $\hat K_2$ set is the set of sources for the pair. 

The first $n-3$ rows can be split into duplicate pairs as before, so the
associated pairs of vertices will always be of opposite on/off status and
the number of them that is lit is always $(n-3)/2$.

Initially, two of the last three vertices are lit.  Since $m$ is even, the
parity of the number of lit vertices is preserved, and so no more than two
of the last three vertices can be lit. Thus, $M(W,e)=(n+1)/2$ in this case,
as required.

The case where $m$ is odd and $n>m+1$ is even, is similar. We first define
$W'\in A(n,m)$ by the formula $W'=\diag(1_m,W_3,1_2,\dots,1_2)$, and then
define $W=(w_{i,j})$ from $W'$ by \rf{E:WW'}.
The corresponding wiring is indicated schematically
in Figure \ref{Figure6}. (In this figure, following our convention,
we
indicate the multiple edges emanating from a $\hat K_2$ and going to
the same target node by
a single edge.)
\begin{figure}[ht]
\begin{center}
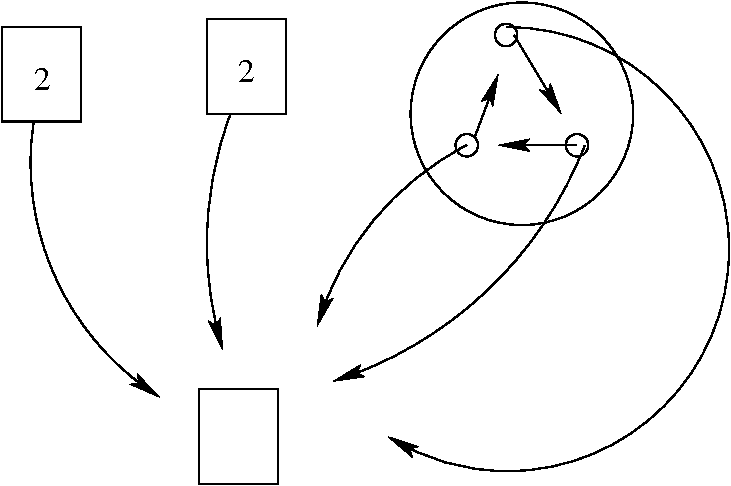
\caption{$n=10$, $m=3$}\label{Figure6}
\end{center}
\end{figure}

There are four unpaired rows,
namely rows $i$, $m\le i\le m+3$. By an analysis similar to the previous
case, at most three of these vertices can be lit (namely vertex $m$ and at
most two of the other three vertices), and half of the remaining $n-4$
vertices are always lit. It follows that $M(W,e)=(n+2)/2$, as required.

Finally, if $n=m+1$, we define $W$ to be the block diagonal matrix
$$
W =
  \begin{pmatrix}
  1_{(m-1)\times m}\hfill & 1_{(m-1)\times 1}\hfill \\
  1_{1\times m}\hfill & 0_{1\times 1}\hfill \\
  0_{1\times m}\hfill & 1_{1\times 1}\hfill
  \end{pmatrix}
$$
See Figure \ref{Figure7}.
\begin{figure}[ht]
\begin{center}
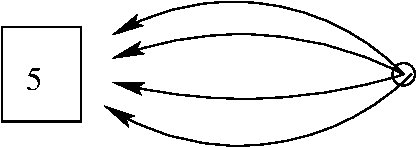
\caption{$n=6$, $m=5$}\label{Figure7}
\end{center}
\end{figure}

The first $m-2$ or $m-1$ rows are paired, depending on whether $m$ is even or
odd, respectively. Thus, $M(W,e)\le 1+m/2$ if $m$ is even, or $M(W,e)\le
2+(m-1)/2$ if $m$ is odd, as required.
\end{proof}

\subsection{Sublinearity}
Generalizing an idea used in the above proof, we see that if $W$ and $c$
have block forms
$$
W = \begin{pmatrix} W_a & 0 \\ 0 & W_b \end{pmatrix}\,\qquad
c = \begin{pmatrix} c_a \\ c_b \end{pmatrix}\,,
$$
then
\begin{equation}\label{E:WaWb}
M(W,c)=M(W_a,c_a)+M(W_b,c_b)\,.
\end{equation}
This readily yields the following:
\begin{cor}
If $\la$ is any one of the four functions $\mu$,
$\mu^*$, $\nu$, or $\nu^*$, then it is sublinear in the first variable:
\begin{equation}\label{E:sublinear}
\la(n_1+n_2,m) \le \la(n_1,m) + \la(n_2,m)\,,
\end{equation}
as long as this equation makes sense (i.e.~we need $n_1,n_2\ge m$ if
$\la=\mu^*$ or $\la=\nu^*$).
\end{cor}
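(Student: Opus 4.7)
The plan is to build a witness for the left-hand side by taking block-diagonal sums of witnesses for the right-hand side, and to invoke \rf{E:WaWb} directly.

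First I would choose admissible wirings (and, in the $\nu$-cases, initial configurations) that attain the minima on the right. Concretely, in the $\mu$ or $\mu^*$ case, pick $W_a\in A(n_1,m)$ with $M(W_a,0)=\mu(n_1,m)$ and $W_b\in A(n_2,m)$ with $M(W_b,0)=\mu(n_2,m)$ (with $A^*$ replacing $A$ for the starred version); in the $\nu$ or $\nu^*$ case, pick $(W_a,c_a)$ and $(W_b,c_b)$ that realize $\nu(n_1,m)$ and $\nu(n_2,m)$ respectively. The hypothesis $n_1,n_2\ge m$ in the starred versions is exactly what is needed for these minima to be defined.

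Next I would form the block diagonal wiring
$$
W = \begin{pmatrix} W_a & 0 \\ 0 & W_b \end{pmatrix}\,,\qquad
c = \begin{pmatrix} c_a \\ c_b \end{pmatrix}\,,
$$
with $c=0$ in the $\mu$-cases. The only thing to verify is admissibility, which is routine: $W$ has $1$'s on the diagonal since $W_a$ and $W_b$ do, and each column of $W$ is a column of $W_a$ or $W_b$ extended by zeros, so $\deg(W)\le m$, with equality on every column when we start from $W_a\in A^*(n_1,m)$ and $W_b\in A^*(n_2,m)$. Hence $W$ lies in $A(n_1+n_2,m)$, respectively $A^*(n_1+n_2,m)$.

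Finally, applying \rf{E:WaWb} to this $W$ and $c$ gives $M(W,c)=M(W_a,c_a)+M(W_b,c_b)$, which equals $\la(n_1,m)+\la(n_2,m)$ by the choice of witnesses, and since $(W,c)$ is admissible for $\la$ on $n_1+n_2$ vertices, $\la(n_1+n_2,m)\le M(W,c)$. This yields \rf{E:sublinear}. There is no real obstacle here; the only point requiring mild care is matching the admissibility class of $W$ to the function $\la$, which is why the starred versions need the standing assumption $n_1,n_2\ge m$.
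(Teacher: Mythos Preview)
Your proof is correct and is exactly the argument the paper has in mind: the paper merely says that \rf{E:WaWb} ``readily yields'' the corollary, and you have filled in precisely the intended details by taking block-diagonal witnesses and checking membership in the appropriate admissibility class.
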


\section{The case $m=2$} \label{S:m=2}

\subsection{Proof of \rf{T:m=2}}
\begin{proof}
Trivially $\mu(1,2)=1$, and it is easy to check that $\mu(2,2)=2$. Taking
$W_3$ as in \rf{E:W3}, we see that $M(W_3,0)=2$, and so
$\mu(3,2)\le\mu^*(3,2)\le 2$. By combining \rf{E:sublinear} with these
facts, we see that for $k\in\Z$, $k\ge 0$, and $i\in\{0,1,2\}$,
$$ \mu(3k+i,2) \le k\mu(3,2)+\mu(i,2)\le 2k+i\,. $$
	Since $2k+i=\lce\ds{\frac{2(3k+i)}{3}}\rce$, this gives the sharp upper bound
for $\mu(n,2)$. The corresponding sharp upper bound for $\mu^*(n,2)$ follows
similarly when $n\ge 1$ has the form $3k$ or $3k+2$, $k\ge 0$. If $n=3k+1$,
$k\ge 1$, only a small change is required to the $\mu$-proof to get a proof
of the sharp $\mu^*$ upper bound:
$$ \mu^*(3k+1,2) \le (k-1)\mu^*(3,2)+2\mu^*(2,2) = 2k+2\,. $$

It remains to show that we can reverse the above inequalities. We first
examine the reverse inequalities for $\mu^*$, so fix $W\in A^*(n,2)$.
	Writing $F:S\to 2^S$ for the edge function, where $S:=S(n)$, we get
a well-defined function $f:S\to S$ by writing $f(i)=j$ whenever there is an
edge from $i$ to $j\not=i$ in the associated graph $G$. For a dynamical system on
any finite set, every point is either periodic or preperiodic. In our
context, this just means that if we apply $f$ repeatedly starting from any initial
vertex $i\in S$, then we eventually get a repeat of an earlier value, and
from then on the iterated values of $f$ go in a cycle.

Note that the topological components of $G$ do not ``interfere'' with each
other: the vertices in any one component affect only the on/off status of
vertices in this component, so maximizing the number of lit vertices can be
done one component at a time (alternatively, this follows from \rf{E:WaWb}
after reordering of the vertices).

A component of the graph $G$ consists of a central circuit containing two or
more vertices, with perhaps some directed trees, each of which leads to some
vertex of the circuit, which we call the {\it root} of that tree. Starting
from the outermost vertices of such a tree (those that are not in the range
of $f$) and working our way down to the root, it is not hard to see that we
can simultaneously light all vertices in each of these trees. Having done
this, some of the vertices in the central circuit may not be lit up. We
follow the vertices around the circuit in cyclic order, pressing each vertex
that is unlit when we reach it until we have gone fully around the circuit.
It is clear that at this stage at most one vertex in the circuit is unlit,
and all the associated trees (excluding the roots) are still fully lit.
\begin{figure}[ht]
\begin{center}
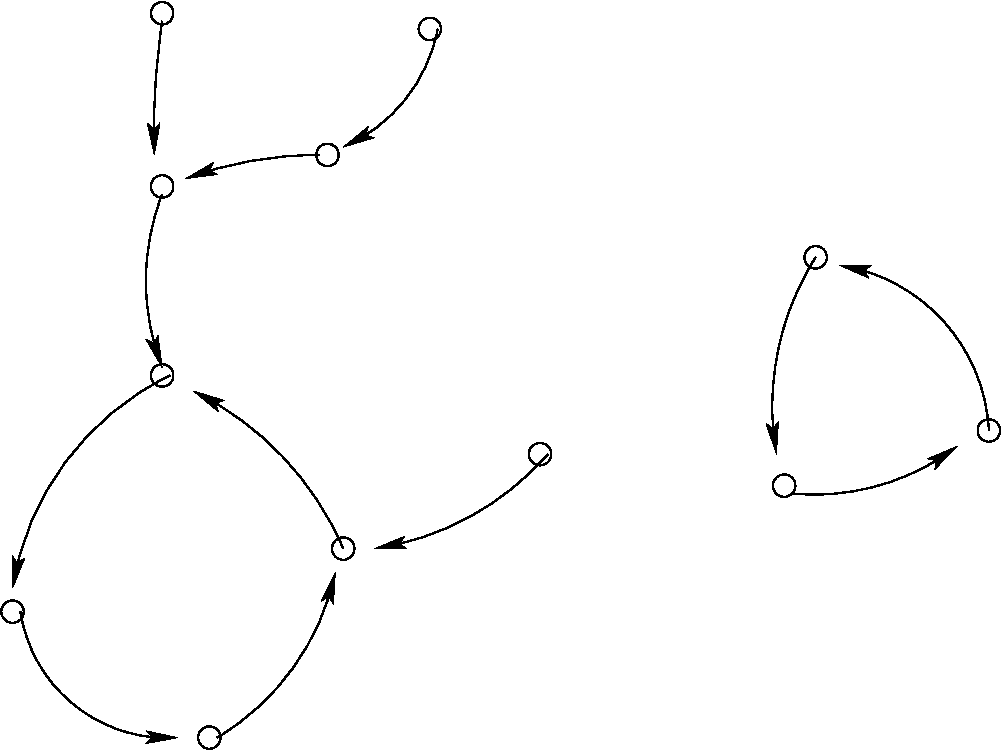
\caption{\lq Dynamics' of $m=2$}\label{Figure8}
\end{center}
\end{figure}

Note that any single vertex press either leaves the number of lit vertices
in a given component unchanged, or changes that number by $2$. Since
initially all vertices are unlit, it follows that the number of lit vertices
in a component is always even. It therefore follows that in a component of
even cardinality all vertices can be lit, while in a component of odd
cardinality all except one can be lit.

Thus, it follows that to minimize $M(W,0)$ we need to maximize the number of
components of odd cardinality (necessarily at least 3), and that the maximum
proportion of lit vertices in any one component is at least $2/3$ (with
equality only for components of cardinality $3$). Thus $\mu^*(n,2)\ge \lce
2n/3\rce$, which gives the required lower bound except when $n=3k+1$,
$k\in\N$. Since $G$ has $n=3k+1$ vertices and all components have at least
two vertices, it can have at most $k-1$ components of odd cardinality,
yielding the desired estimate $\mu^*(3k+1,2)\ge 3k+1-(k-1)=2k+2$. Thus
$\mu^*(n,2)$ is given by the stated formula in all cases.

For $\mu$, the above proof goes through with little change. We define $f(i)$
as before whenever Button $i$ switches two bulbs, and $f(i)=i$ otherwise.
The graph $G$ can now have {\it prefixed components} where the central
circuit contains only a single vertex, corresponding to a fixed point of
$f$. However, it is clear from our earlier arguments that prefixed
components can always be fully lit, so only the odd cardinality
{\it non-prefixed components} (i.e.~those without a fixed point) can
contribute unlit bulbs. Thus, $\mu(n,2)\ge\lce 2n/3\rce$, as required.
\end{proof}

Although prefixed components do not contribute unlit vertices in the last
paragraph of the above proof, singleton components (corresponding to a
vertex with no inbound or outbound edge) are important since they
allow us to get $k$, rather than just $k-1$, non-prefixed components of odd
cardinality when $n=3k+1$. This accounts for the difference between
$\mu(n,2)$ and $\mu^*(n,2)$ in this case.

It follows from the above proof that a wiring minimizes $M(W,0)$ in either
$A^*(n,2)$ or $A(n,2)$ if and only if its associated graph maximizes the
number of non-prefixed components of odd cardinality among the allowed set
of graphs. Such components have cardinality at least $3$ so, for $n$
a multiple of $3$, this means that each component must have three vertices
and correspond (up to permutation) to one or other of the matrices
$$
\left(\begin{matrix} 1&0&0\\1&1&1\\0&1&1 \end{matrix}\right)
\textup{ or }
\left(\begin{matrix} 1&0&1\\1&1&0\\0&1&1 \end{matrix}\right).
$$
For $n$ of the form $3k+1$ or $3k+2$, it similarly follows easily from the
extremality criterion that all components except at at most two are of
cardinality $3$ and have one of the two above forms. The possible
exceptional components depend on the mod-$3$ nature of $n$, as well as
whether we are looking at $A^*(n,2)$ or $A(n,2)$, but all are of
cardinality $2$, $4$, $5$, or $7$. We leave to the reader the routine but
tedious task of using the above extremality criterion to find all such
sets of exceptional components.

\section{Pivoting and the case $m=3$} \label{S:m=3}
\subsection{Pivoting}
In preparation for the proof of \rf{T:m=3}, we introduce the concept of {\it
pivoting}. Pivoting about a vertex $i$, $1\le i\le n$, is a way of changing
the given wiring $W$ to a special wiring $W^i$ such that $M(W^i,c)\le
M(W,c)$. Additionally, pivoting preserves the classes $A(n,m)$ and
$A^*(n,m)$. 
\begin{figure}[h]
\begin{center}
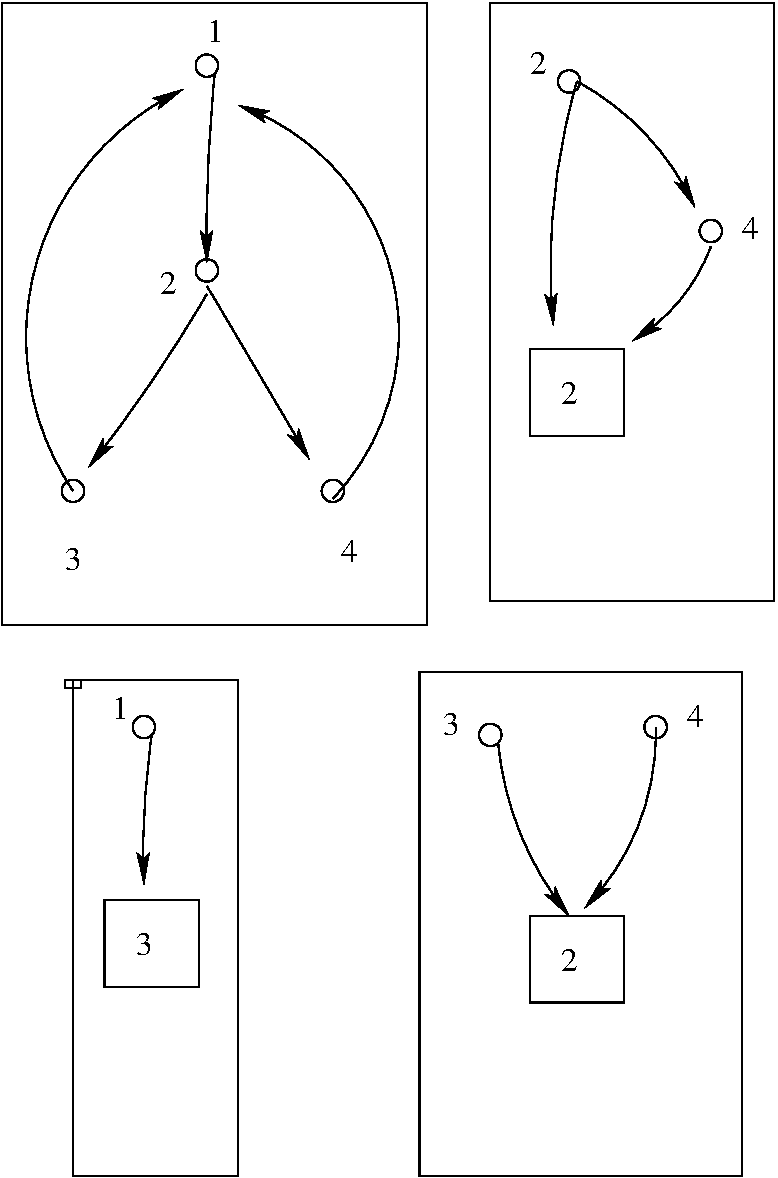
\caption{Pivoting}\label{Figure9}
\end{center}
\end{figure}

Fix a wiring $W=(w_{i,j})$ and initial configuration $c$, and let $F:S\to
2^S$ denote the edge function associated to $W$, where $S=S(n)$.
Given $i\in S$, let $M_i=M(W^i,c)$ where the {\it pivoted wiring matrix}
$W^i$ is defined by the condition that its $j$th column equals the $i$th
column of $W$ if $j\in F(i)$, and equals the $j$th column of $W$ otherwise.
In other words, $W^i$ rewires the system so that pressing the $j$th vertex
has the same effect as pressing the $i$th vertex in the original system
whenever $j\in F(i)$. On the other hand, it is easy to see that $M_i$ is the
maximum value of $|Wx+c|$ over all vectors $x$ such that $x_j=0$ whenever
$j\in F(i)\setminus\{i\}$. 
In fact, any attainable set of lit bulbs for
the wiring $W^i$ and initial configuration $c$ can be achieved without pressing any
of the buttons in $F(i)\setminus\{i\}$. Hence, the same set of lit bulbs can be achieved
with the original wiring $W$ without pressing any of those buttons.
In particular, $M_i\le M(W,c)$.
See Figure \ref{Figure9} for examples.

Pivoting about $i$, as defined above, is a process with several nice
properties:%
\begin{itemize}
\item it does not increase the value of $M$: $M(W^i,c)\le M(W,c)$;%
\item it preserves membership of the classes $A(n,m)$ and $A^*(n,m)$;%
(In fact, if $j\in F(i)$,  then $W^i_{j,j} = 1$, that is 
$W^i$ still has $1$’s along
the diagonal. This is the only property that actually requires checking in order
to verify that the classes $A(n, m)$ and $A^*(n, m)$ are preserved.)
\item if $F^i$ is the edge function of $W^i$, then $F^i(i)=F(i)$ is a
    forward invariant augmented complete subgraph of the associated graph $G^i$.
\end{itemize}

It is sometimes useful to pivot {\em partially} about $i$: given $T\subset S$, and
$i\in S$, we define $W'$ by replacing the $j$th column of $W$ by its $i$th
column whenever $j\in F(i)\setminus T$. Such {\it pivoting about $i$ with
respect to $T$} satisfies the same non-increasing property, preserves
membership in $A(n,m)$ and $A^*(n,m)$, and $F(i)\setminus T$ is a (not
necessarily forward invariant) augmented complete subgraph of the associated graph
$G'$.

\subsection{}
Pivoting is the key trick in the proof of the following lemma.

\begin{lem}\label{L:alternative}
Let $m\ge 2$ and $n\ge 1$. Then either $\mu(n+m,m)=\mu(n+m,m-1)$, or
$$ \mu(n+m,m)\ge \mu(n,m)+\nu(m,m) = \mu(n,m) + \lce {m/2} \rce\,. $$
\end{lem}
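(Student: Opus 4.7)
The plan is to dichotomize based on whether an extremal wiring actually uses the full degree allowance. Pick $W\in A(n+m,m)$ with $M(W,0)=\mu(n+m,m)$. If $\deg(W)\le m-1$, then $W\in A(n+m,m-1)$ as well, so
$$
\mu(n+m,m)\le\mu(n+m,m-1)\le M(W,0)=\mu(n+m,m),
$$
(the left inequality coming from $A(n+m,m-1)\subset A(n+m,m)$), and we are in the first alternative.

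Otherwise, fix a vertex $i$ with $|F(i)|=m$ and pivot $W$ about $i$. The resulting $W^i\in A(n+m,m)$ satisfies $M(W^i,0)\le M(W,0)$, and, after relabelling so that $T:=F(i)=\{1,\dots,m\}$, every column of $W^i$ indexed by $T$ equals the $i$th column of $W$, which is supported precisely on $T$. Hence $W^i$ has the block form
$$
W^i=\begin{pmatrix} 1_m & B \\ 0 & C \end{pmatrix},
$$
for some $m\times n$ block $B$ and $n\times n$ block $C$. Here $C$ inherits $1$s on its diagonal from $W$, and each of its columns is a sub-column of a column of $W$, hence of Hamming norm at most $m$; in particular $C\in A(n,m)$.

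Now write $x=(x_T,x_{T^c})^t$ and set $\alpha:=\sum_{k\in T}x_{T,k}\in\F_2$. The last $n$ coordinates of $W^ix$ are $Cx_{T^c}$, depending only on $x_{T^c}$, while the first $m$ coordinates are $\alpha u+Bx_{T^c}$, where $u\in\F_2^m$ is the all-$1$s column vector. Since $\alpha$ may be chosen freely (take $x_T=0$ or $x_T=(1,0,\dots,0)^t$) and since $\max(|v|,|u+v|)=\max(|v|,m-|v|)\ge\lce m/2\rce$ for every $v\in\F_2^m$, maximizing over $x_{T^c}$ gives
$$
M(W^i,0)\;\ge\;\lce m/2\rce+\max_{x_{T^c}}|Cx_{T^c}|\;=\;\lce m/2\rce+M(C,0)\;\ge\;\lce m/2\rce+\mu(n,m).
$$
Combining with $M(W^i,0)\le M(W,0)=\mu(n+m,m)$ and using \rf{T:nu}(a) to identify $\lce m/2\rce$ with $\nu(m,m)$ yields the second alternative.

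The only step requiring any real care is verifying the block structure of $W^i$, in particular that $C\in A(n,m)$; after that, the lower bound reduces to the trivial inequality $\max(|v|,m-|v|)\ge\lce m/2\rce$ together with the fact that $x_T$ and $x_{T^c}$ can be chosen independently.
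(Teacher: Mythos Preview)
Your proof is correct and follows essentially the same approach as the paper: pivot an extremal wiring about a degree-$m$ vertex so that $F(i)$ becomes a forward-invariant $C_m$, then light the complement first (giving at least $\mu(n,m)$) and finally toggle the $C_m$ if necessary (giving at least $\lceil m/2\rceil$). Your block-matrix formulation makes the argument more explicit than the paper's narrative version, but the underlying idea is identical.
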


\begin{proof}
Suppose $\mu(n+m,m)<\mu(n+m,m-1)$, and let $W\in A(n+m,m)$ be such that
$M(W,0)=\mu(n+m,m)$. Then, $W$ has a vertex $i$ of degree $m$. By minimality
of $W$, pivoting about $i$ gives $W^i\in A(n+m,m)$ with $M(W^i,0) =
\mu(n+m,m)$ (cf. Figure \ref{Figure10}. The loop marked
$n$ just indicates an unspecified subgraph
of order $n$.). For the wiring $W^i$, we first press a set of vertices in
\begin{figure}[ht]
\begin{center}
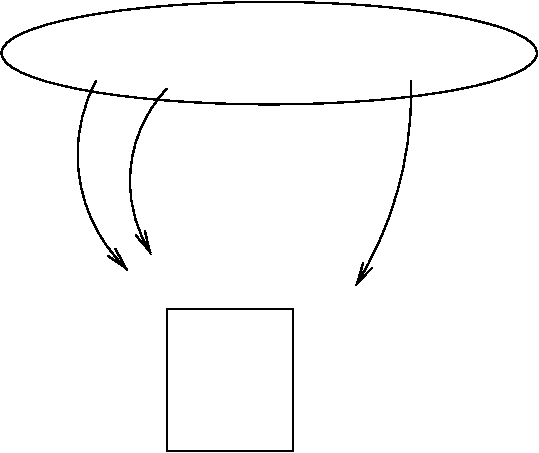
\caption{$W^i$}\label{Figure10}
\end{center}
\end{figure}
\noindent
	$S(n+m)\setminus F(i)$ so as to maximize the number of lit vertices in
	$S(n+m)\setminus F(i)$, and then we press vertex $i$ if fewer than half of the
vertices in $F(i)$ are lit. By forward invariance of $F(i)$, the result
follows.
\end{proof}

\begin{proof}[Proof of \rf{T:m=3}(a)]
Trivially, we have that $\mu(n,3)\le\mu(n,2)$, with equality if $n<3$. It is
also immediate that $\mu(3,3)=\mu(3,2)=2$: any wiring that includes a
vertex of degree $3$ allows us to light all vertices by pressing the degree
$3$ vertex.

Suppose therefore that $\mu(n',3)=\mu(n',2)$ for all $n'<n$, where $n>3$.
Either this equation still holds when $n'$ is replaced by $n$, or
$$
\mu(n,2) = \mu(n-3,2)+2 = \mu(n-3,3)+2 = \mu(n-3,3)+\nu(3,3) \le
\mu(n,3)\le \mu(n,2).
$$
Here, the first equality follows from \rf{T:m=2}, the second from the
inductive hypothesis, and the first inequality from \rf{L:alternative}.
Since $\mu(n,2)$ is at both ends of this line, we must have $\mu(n,3) =
\mu(n,2)$, and the inductive step is complete.
\end{proof}

\subsection{}
For the proof of \rf{T:m=3}(b), we need another lemma.

\begin{lem}\label{L:nn'm}
Let $n,m,n'\in\N$, with $n\ge m$. Then
$$ \mu^*(n+n',m+1)\le \mu^*(n,m)+n'\,. $$
\end{lem}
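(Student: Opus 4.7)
The plan is to construct an explicit wiring $\tilde{W}\in A^*(n+n',m+1)$ from an optimal $W\in A^*(n,m)$ satisfying $M(W,0)=\mu^*(n,m)$, and show that $M(\tilde W,0)\le \mu^*(n,m)+n'$. The idea is to append $n'$ extra vertices in a way that the ``new block'' is entirely decoupled from the old dynamics in terms of what can be lit among the original vertices, while guaranteeing that the extra vertices contribute at most $n'$ lit bulbs.

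Concretely, I would define $\tilde W$ in block form
$$
\tilde W \;=\; \begin{pmatrix} W & U \\ A & I_{n'} \end{pmatrix},
$$
where $U$ is the $n\times n'$ matrix each of whose columns is a copy of the first column of $W$, and $A$ is any $n'\times n$ matrix with exactly one $1$ per column (for instance, all its $1$s placed in its first row). Since $W$ has $1$s on its diagonal and every column of $W$ has exactly $m$ ones, it is straightforward to verify that $\tilde W$ has $1$s on its diagonal and that each of its columns has weight exactly $m+1$: the first $n$ columns pick up weight $m$ from $W$ and $1$ from $A$, while the last $n'$ columns pick up weight $m$ from $U$ and $1$ from $I_{n'}$. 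Hence $\tilde W\in A^*(n+n',m+1)$.

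For the upper bound on $M(\tilde W,0)$, write any $x\in\F_2^{n+n'}$ as $x=(y,z)$ with $y\in\F_2^n$ and $z\in\F_2^{n'}$; then
$$
\tilde W x \;=\; \begin{pmatrix} Wy+Uz \\ Ay+z \end{pmatrix}.
$$
The critical point is that, because every column of $U$ equals $We_1$, we have $Uz=\bigl(\textstyle\sum_k z_k\bigr)\,We_1=W\bigl((\sum_k z_k)e_1\bigr)$, so $Wy+Uz=Wy'$ for some $y'\in\F_2^n$. Therefore $|Wy+Uz|\le M(W,0)=\mu^*(n,m)$, while trivially $|Ay+z|\le n'$. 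Adding these gives $|\tilde W x|\le \mu^*(n,m)+n'$ for every $x$, hence $\mu^*(n+n',m+1)\le M(\tilde W,0)\le \mu^*(n,m)+n'$.

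There is no real obstacle here; the only design choice that is not forced is the shape of $U$, and the point is precisely to make the columns of $U$ lie in the column span of $W$ so that the first block of $\tilde W x$ can never do better than $W$ already does. The only arithmetic check is that $A^*(n+n',m+1)$ is nonempty, which follows from $n+n'\ge m+1$ (given $n\ge m$ and $n'\ge 1$).
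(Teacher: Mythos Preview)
Your proof is correct and follows the same overall strategy as the paper: start from an optimal $W\in A^*(n,m)$, enlarge it to a matrix in $A^*(n+n',m+1)$ by adjoining $n'$ new rows and columns, and argue that the new columns cannot help light more than $\mu^*(n,m)$ of the original bulbs.

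The difference is in the choice of blocks. The paper makes the last $n'$ columns literal duplicates of the first $n'$ columns (so pressing the new vertices can be absorbed into presses of the old ones), which forces the diagonal condition to rely on $n'\le n$; the paper therefore first reduces to $n'\le n$ using sublinearity and the trivial bound $\mu^*\le n$. Your construction instead places $I_{n'}$ directly in the lower-right block and makes every column of $U$ equal to a single fixed column of $W$; this guarantees $1$s on the diagonal for all $n'\ge 1$ with no case split, and the key observation $Uz\in\{0,We_1\}\subset W(\F_2^n)$ plays the same role as the paper's duplicate-column argument. So your route is marginally cleaner in that it avoids the preliminary reduction, at the cost of a slightly less symmetric matrix.
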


\begin{proof}
It suffices to prove the lemma subject to the restriction $n'\le n$, since
this case, the trivial estimate $\mu^*(n,m)\le n$, and sublinearity
\rf{E:sublinear}
together imply the general case. Let us therefore assume that $n'\le n$.

\ignore{
Detail:
For $n'<n$, we have
\begin{align*} 
&\mu^*(n+n',m+1)
\\
\le& 
\mu^*(n+1,m+1)+\mu^*(n'-1,m+1)
\\
\le& \mu^*(n,m)+1 + n'-1
\\
=&
\mu^*(n,m)+n'.
\end{align*}
}

Let $V=(v_{i,j})\in A^*(n,m)$ be such that $M(V,0)=\mu^*(n,m)$. We now
define a matrix $W=(w_{i,j})\in A^*(n+n',m+1)$. First the upper left block
of $W$ is a copy of $V$, i.e.~we let $w_{i,j}=v_{i,j}$ for all $1\le i,j\le
n$. Next, the $n'\times n$ block of $W$ below $V$ consists of copies of the
$n'\times n'$ identity matrix; the last of these copies will be missing some
columns unless $n$ is a multiple of $n'$. Lastly, we define
$w_{i,n+j}=w_{i,j}$ for all $1\le j\le n'$. It is straightforward to verify
that $W\in A^*(n+n',m+1)$; note that the assumption $n'\le n$
ensures that $W$ has $1$'s along the diagonal. Refer to Figure \ref{Figure11}
for a schematic. Note that vertex $6+i$ has the same
targets as vertex $i$, but these edges going to
vertices other than $7$, $8$ or $9$ are not shown.
\begin{figure}[ht]
\begin{center}
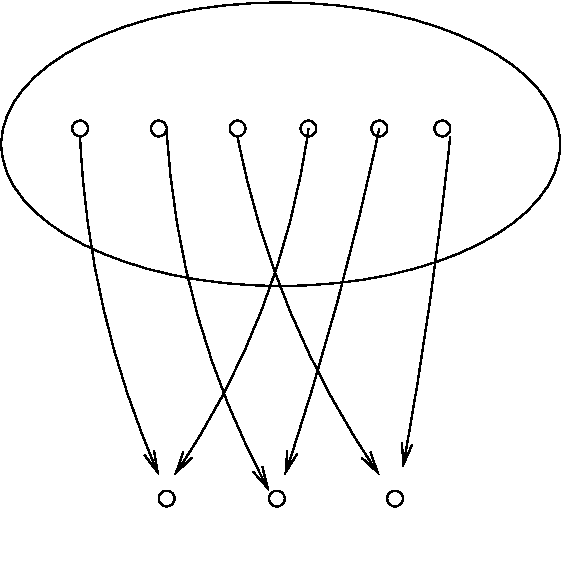
\caption{$n=6$, $n'=3$}\label{Figure11}
\end{center}
\end{figure}

Since all columns after the $n$th column are repeats of earlier columns, it
suffices to consider what happens when we press only combinations of the
first $n$ vertices. Such combinations light at most $\mu^*(n,m)$ of the
first $n$ vertices, so we are done.
\end{proof}

\subsection{Proof of \rf{T:m=3}(b)}
\begin{proof}
\rf{L:nn'm} ensures that if $k,i\in\N$, then $\mu^*(3k+i,3) \le
\mu^*(3k,2)+i = 2k+i$. This is the required sharp upper bound if $i=1,2$,
since $2k+1=\mu(3k+i,3)$ in this case. On the other hand,
$\mu^*(3k+i,3)\ge\mu(3k+i,3)=2k+i$, for all $k\in\N$ and $i=1,2$, 
and this gives the
required converse for $i=1,2$.

It remains to handle the case where $n$ is a multiple of $3$. First, we show
that the lower bound $\mu^*(3k,3)\ge\mu(3k,3)=2k$ is sharp when $k=2k'$ is
even. Letting
\begin{equation} \label{E:W6}
W_6 =
  \begin{pmatrix}
  1&0&0&0&0&0 \\ 1&1&1&0&0&0 \\ 0&1&1&0&0&0 \\
  0&1&1&1&1&1 \\ 1&0&0&1&1&1 \\ 0&0&0&1&1&1
  \end{pmatrix}
  \in A^*(6,3)\,,
\end{equation}
we claim that $M(W_6,0)=4$. Assuming this claim, \rf{E:sublinear} gives the
desired sharpness: $\mu^*(6k',3)\le k'\mu^*(6,3)\le k'M(W_6,0)=4k'$.
\begin{figure}[h]
\begin{center}
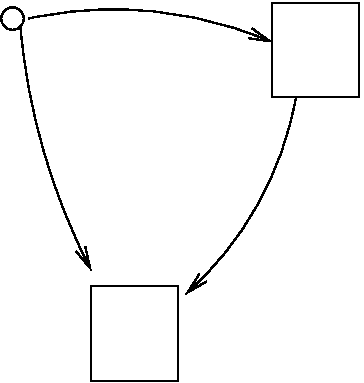
\caption{$W_6$}\label{Figure12}
\end{center}
\end{figure}

To establish the claim, it suffices to consider sets of vertex presses
involving only vertices $1$, $2$, and $4$. With this restriction, we proceed
to list all eight possible values of $x$, and deduce that $M(W_6,0)=4$:

{\renewcommand{\arraystretch}{1.1}\renewcommand{\tabcolsep}{5mm}
\begin{center}
\begin{tabular}{c|c|c}
$x^t$ & $(W_6x)^t$ & $|W_6x|$ \\ \hline
(0,0,0,0,0,0) & (0,0,0,0,0,0) & 0 \\ \hline
(1,0,0,0,0,0) & (1,1,0,0,1,0) & 3 \\ \hline
(0,1,0,0,0,0) & (0,1,1,1,0,0) & 3 \\ \hline
(1,1,0,0,0,0) & (1,0,1,1,1,0) & 4 \\ \hline
(0,0,0,1,0,0) & (0,0,0,1,1,1) & 3 \\ \hline
(1,0,0,1,0,0) & (1,1,0,1,0,1) & 4 \\ \hline
(0,1,0,1,0,0) & (0,1,1,0,1,1) & 4 \\ \hline
(1,1,0,1,0,0) & (1,0,1,0,0,1) & 3 \\ \hline
\end{tabular}
\end{center}
}
	(Here $x^t$ denotes the row-vector transpose of the column vector $x$.)

It remains to handle the case where $n=6k'-3$ for some $k'\in\N$. It is
trivial that $\mu^*(3,3)=3$. Next note that \rf{L:nn'm} ensures that for
$k\ge 2$, $\mu^*(3k,3)\le\mu^*(3k-3,2)+3=2k+1$, so we need to show that this
is sharp if $k>1$ is odd.

Supposing $\mu^*(n,3)\le2k$ for some fixed $n=3k$, $k\in\N$, $k>1$, we will
prove that $k$ must be even. Let $W=(w_{i,j})\in A^*(n,3)$ be such that
$M(W,0)\le2k$, let $S=S(n)$, and let $F:S\to 2^S$ be the edge
function associated to $W$.

We can assume that $W$ is additionally chosen so that the associated graph
$G$ has a maximal number of (disjoint) $\hat K_3$'s among all matrices $W'\in
A^*(n,3)$ for which $M(W',0)=2n/3$. The maximum number of $\hat K_3$'s is always
positive since we can get a $\hat K_3$ by pivoting about any one vertex; $\hat K_3$
sets are pairwise disjoint and forward invariant, since each vertex in a
$\hat K_3$ uses up its two allowed outbound edges within the same $\hat K_3$.

We define $A$ to be the union of all the $\hat K_3$ sets. 
If $i\in S\setminus A$,
then $F(i)\cap A$ must be nonempty, since otherwise pivoting about $i$ would
create an extra $\hat K_3$. Thus, each $i\in S\setminus A$ has at most one edge
from it to another vertex in $S\setminus A$. Suppose there is such a vertex
$i$ with $F(i)\setminus\{i\}$ not a subset of $A$. Then, we can pivot about $i$ relative to
$A$ to get a $\hat K_2$, and the only edges coming from this $\hat K_2$ are single
edges from both of its vertices to the same element in $A$. We repeat such
pivoting of vertices relative to $A$ to create more such $\hat K_2$s until this
is no longer possible. From now on, $W$ will denote this modified wiring
matrix. We denote by $B$ the union of the $\hat K_2$ vertices and write
$C=S\setminus (A\cup B)$, and we refer to each vertex in $C$ as a $\hat K_1$
(which it is, trivially).

We already know that there is an edge from each vertex in $C$ to some vertex
in $A$. If there is only a single edge from some $i\in C$ to $A\cup B$, then
there must be an edge from $i$ to some $j\in C$. Pivoting about $i$ relative
to $A\cup B$ (or equivalently, relative to $A$), we create a new $\hat K_2$,
contradicting the fact that this cannot be done. Thus, there are two edges
from each $i\in C$ to $A\cup B$. See Figure \ref{Figure13}.

\begin{figure}[ht]
\begin{center}
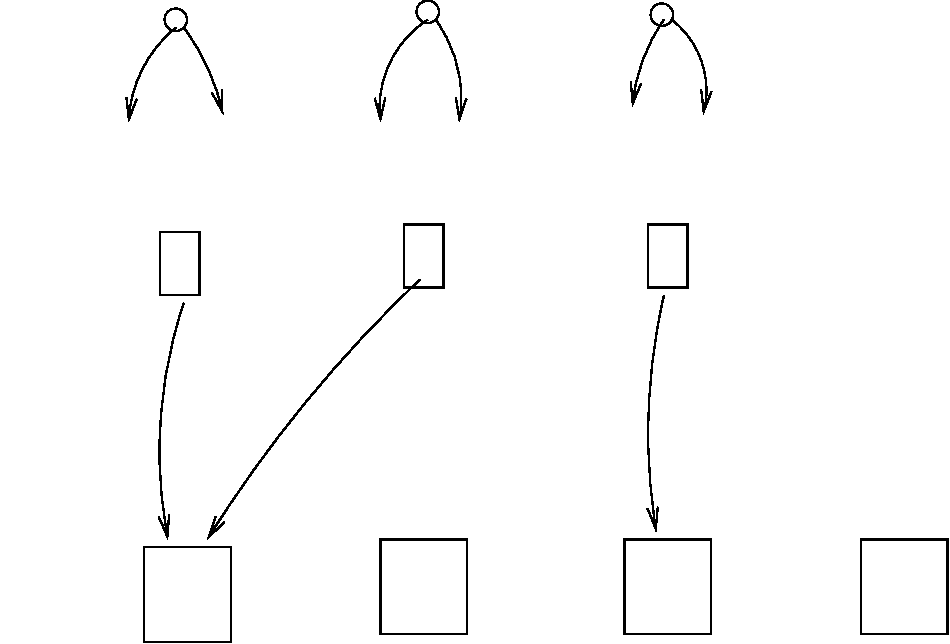
\caption{}\label{Figure13}
\end{center}
\end{figure}

We have shown that there are edges from $C$ to $A\cup B$, and from $B$ to
$A$, but that both $A$ and $A\cup B$ are forward invariant. Also, there are
no links between elements in $C$, or between elements in distinct $\hat K_2$'s or
in distinct $\hat K_3$'s. There are $3s$ elements in $A$, $2t$ elements in $B$,
and $u$ in $C$, for some integers $s,t,u$, and we have $3s+2t+u=n$.

The forward invariance of both $A$ and $A\cup B$ suggests two algorithms for
lighting many of the vertices. The first is to begin by pressing all these
vertices in $C$ to light all these vertices. After this first step, we can
ensure that at least one vertex in each $\hat K_2$ is lit by pressing a vertex in
any $\hat K_2$ without a lit vertex. Finally, we ensure that at least two
vertices are lit in each $\hat K_3$ by pressing a vertex in any $\hat K_3$ in which
fewer than two vertices are lit. Having done this, we have at least $2s+t+u$
lit vertices, so $2s+t+u\le\mu^*(n,3)$. Thus, $6s+3t+3u\le 3\mu^*(n,3)\le2n$.
When we compare this with the equation $6s+4t+2u=2n$, we deduce that $t\ge
u$.

An alternative algorithm for lighting the vertices is to first press one
vertex in each $\hat K_2$, thus lighting all $\hat K_2$ vertices. As a second step,
press a vertex in any $\hat K_3$ in which fewer than $2$ vertices are lit. Having
done this, at least two vertices in each $\hat K_3$ are lit as well as both
vertices in each $\hat K_2$. Consequently, $2s+2t\le\mu^*(n,3)\le2n/3$. Thus,
$6s+6t\le 2n$, while $6s+4t+2u=2n$. It follows that $u\ge t$, and so $u=t$.

Note that the first lighting algorithm gives at least $2s+2t=2n/3$ lit
vertices, and it actually gives more than this number unless after the first
step exactly one vertex in each $\hat K_2$ is lit. Since any larger number
contradicts $\mu^*(n,3)=2n/3$, there must be an edge from $C$ to each $\hat K_2$.
But, since the numbers of $\hat K_1$'s and of $\hat K_2$'s are equal, and there is at
most one edge from each $\hat K_1$ to $B$ (since at least one edge from each
$\hat K_1$ goes to $A$), it follows that from each $\hat K_1$ there is an edge to a
$\hat K_2$, and no other vertex in $C$ is linked to the same $\hat K_2$, i.e.~we can
pair off each $\hat K_1$ with the unique $\hat K_2$ to which it is linked in the
graph. See Figure \ref{Figure14}.
\begin{figure}[ht]
\begin{center}
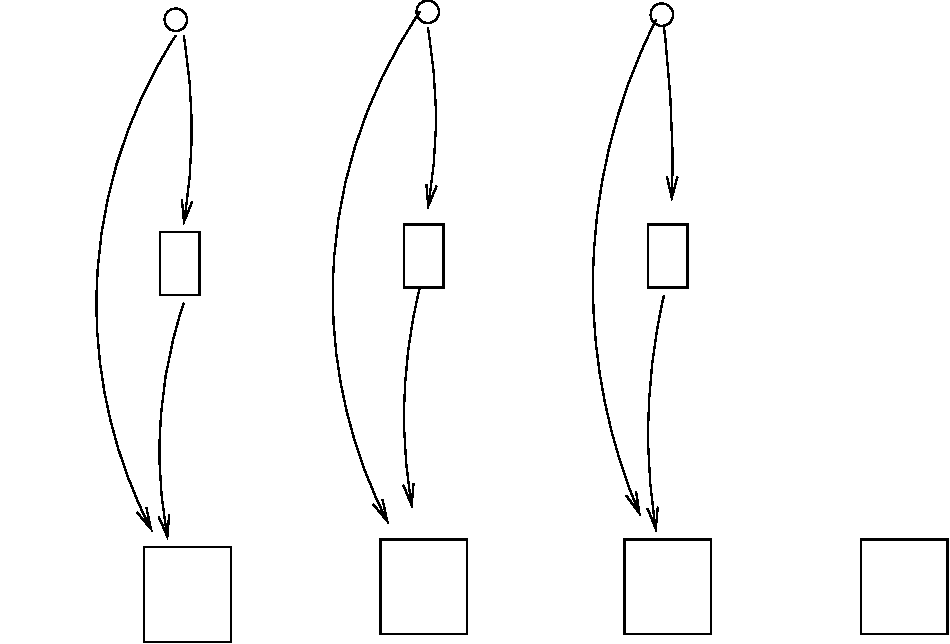
\caption{}\label{Figure14}
\end{center}
\end{figure}
We refer to the subgraph of $G$ given by the union of a $\hat K_1$ and a
$\hat K_2$ plus the edge between them as a $C_{1,2}$; the set of its three
vertices is a $C_{1,2}$ set.

The second lighting algorithm will give more than $2s+2t=2n/3$ lit vertices
unless the first step ends with one or two lit vertices in each $\hat K_3$. Thus,
there is an edge from at least one $\hat K_2$ to each $\hat K_3$. Since any one $\hat K_2$
is linked to only a single $\hat K_3$, it follows that $t\ge s$.

We now define the {\it active vertices} to be 
all $\hat K_1$ vertices, together with one vertex from
each $\hat K_2$, and the {\it active edges} are all the edges coming from active
vertices. When considering the effect of pressing sets of vertices in $B\cup
C$, we can restrict ourselves to considering only sets of active vertices,
hence the terminology.

To light more than two thirds of the vertices, it suffices to first light
two vertices in every $C_{1,2}$ set in such a way that there is at least one
$\hat K_3$ that is either fully lit or fully unlit, since we can subsequently
light two thirds of all vertices in all other $\hat K_3$ sets, together with all
vertices in the fully unlit or fully lit $\hat K_3$, by pressing only $\hat K_3$
vertices. Since each $\hat K_3$ is forward invariant, we are done.

But, given a $C_{1,2}$ set with all vertices unlit, pressing one or both of
its active vertices leaves exactly two of its vertices lit. This gives us
three ways of lighting two thirds of the vertices in that $C_{1,2}$ set, and
this flexibility will be crucial to proving that $n$ must be a multiple of
$6$. In particular, it means that for any given $\hat K_3$, there must be an
associated $C_{1,2}$ both of whose active vertices have edges leading to
that $\hat K_3$, since if this were not so, we could light two vertices in each
$C_{1,2}$ without ever pressing a vertex linked to that $\hat K_3$. Furthermore,
even if a $C_{1,2}$ is doubly linked to a $\hat K_3$, but the two active edges
between them connect to the same vertex, then by pressing both active
vertices, the on/off status of all vertices in the $\hat K_3$ remains unchanged.
Let us therefore say that a $C_{1,2}$ set with two active links to distinct
vertices in a $\hat K_3$ is {\it well linked} to that $\hat K_3$ set. We say that they
are {\it badly linked} if they are linked but not well linked.

It follows that $S$ can be decomposed into a collection of $C_{1,2}$ sets,
each of which is paired off with a distinct $\hat K_3$ set to which it is well
linked, plus $t-s$ extra $C_{1,2}$ sets that have not been paired off with
any $\hat K_3$, but are linked (well or badly) to some of the $\hat K_3$'s. We claim
that if $t>s$ then the residual $C_{1,2}$ sets always allow us to arrange
that at least one $\hat K_3$ is fully lit or fully unlit after we light two
vertices in every $C_{1,2}$. It follows from this claim 
that $n$ cannot be an odd
multiple of $3$, since then we would have $t-s>0$, and we could light more
than two thirds of the vertices.

Suppose therefore that $t>s$, and so there exists some particular $\hat K_3$ with
vertex set $D=\{a,b,c\}$, say, that has more than one $C_{1,2}$ linked to
it, at least one of which is well linked.  We wish to show that we can press
one or both of the active vertices in each of the $C_{1,2}$'s linked to $D$
while keeping $D$ {\it in sync} (meaning that all three of its
vertices are in the same on/off state).

Now $D$ is initially in sync, and we can handle any two well-linked
$C_{1,2}$'s while keeping $D$ in sync. To see this, note that if the two
pairs of active links go to the same pair of vertices in $D$, then we press
all four active vertices in both $C_{1,2}$'s. If on the other hand, they do
not go to the same pair of vertices then without loss of generality, one
$C_{1,2}$ is linked to $a$ and $b$ and the other to $b$ and $c$. By pressing
three of the four active vertices, we can toggle the on/off status of all
three vertices in $D$.

Since we can handle well-linked $C_{1,2}$'s two at a time, and we can handle
badly linked ones one at a time, while keeping $D$ in sync, we can reduce to
the situation of having to handle only two or three $C_{1,2}$'s, with at
least one of them well linked. We have already handled the case of two
well-linked $C_{1,2}$'s, so assume that there are two $C_{1,2}$'s and
exactly one is well linked, to $a$ and $b$, say, while the other is badly
linked, with either one or two links to a single vertex $v\in D$. By
symmetry, we reduce to either of two subcases: if $v=a$, then we press one
active vertex in both $C_{1,2}$'s that is connected to $a$, while if $v=c$,
then we press three vertices so as to toggle the on/off status of all of
$D$.

There remains the case of three linked $C_{1,2}$'s. If two are well linked
and one badly linked, then we just handle the two well-linked ones together
as above, and separately handle the badly linked one. Finally, all three may
be well linked. If all three $C_{1,2}$'s link to the same pair of vertices,
$a$ and $b$, say, then we press both active vertices in one of them and one
in the other two, to ensure that both $a$ and $b$ are toggled twice (and so
unchanged). If two $C_{1,2}$'s link to the same pair of vertices, $a$ and
$b$, say, and the third links to $b$ and $c$, say, then we can press one
vertex in each $C_{1,2}$ to ensure that all three vertices in $D$ are
toggled once. Finally, if no two $C_{1,2}$'s leads to the same pair of
vertices, then one leads to $a,b$, another to $b,c$, and a third to $c,a$.
We can press all six of the active vertices so as to toggle each of $a,b,c$
twice. This finishes the proof of the theorem.
\end{proof}

\subsection{Remark}
Note that even when $n$ is a multiple of $6$, the above argument gives us
some extra information: after suitable pivoting, any wiring $W\in A^*(n,3)$
with $M(W,0)=2n/3$ must reduce to a collection of $C_{1,2}$'s each of which
is well linked to a distinct $\hat K_3$. Each associated subgraph with six
vertices is a component of the full graph and is unique (up to relabeling of
the vertices). Moreover, it is the graph of the wiring $W_6$ in \rf{E:W6} so,
after suitable pivoting, any wiring $W\in A^*(n,3)$ with $M(W,0) = 2n/3$
reduces to $n/6$ copies of $W_6$.


\bigskip
\noindent
{\small
\authorbio
}

\end{document}